\let\cite=\citet
\begin{document}

\newcommand\footnotemarkfromtitle[1]{%
\renewcommand{\thefootnote}{\fnsymbol{footnote}}%
\footnotemark[#1]%
\renewcommand{\thefootnote}{\arabic{footnote}}}

\title{Second-order invariant domain preserving approximation of the
  Euler equations\\ using convex limiting\footnotemark[1]}

\author{Jean-Luc Guermond\footnotemark[2]
\and Murtazo Nazarov\footnotemark[3]
\and Bojan Popov\footnotemark[2]
\and Ignacio Tomas\footnotemark[2]}
\date{Draft version \today}

\maketitle

\renewcommand{\thefootnote}{\fnsymbol{footnote}} \footnotetext[1]{
 This material is based upon work supported in part by the National
  Science Foundation grants DMS-1619892 and DMS-1620058, by the Air
  Force Office of Scientific Research, USAF, under grant/contract
  number FA9550-15-1-0257, and by the Army Research Office under
  grant/contract number W911NF-15-1-0517.

  Sept. 30th 2017. Submitted for publication to SIAM SISC.}
\footnotetext[2]{Department of Mathematics, Texas A\&M University 3368
  TAMU, College Station, TX 77843, USA.}  \footnotetext[3]{Division of
  Scientific Computing, Department of
  Information Technology, Uppsala University, Uppsala, Sweden}
\renewcommand{\thefootnote}{\arabic{footnote}}

\begin{abstract}
  A new second-order method for approximating the compressible Euler
  equations is introduced.  The method preserves all the known
  invariant domains of the Euler system: positivity of the density,
  positivity of the internal energy and the local minimum principle on
  the specific entropy. The technique combines a first-order,
  invariant domain preserving, Guaranteed Maximum Speed method using a
  Graph Viscosity (GMS-GV1) with an invariant domain violating,
  but entropy consistent, high-order method.  Invariant domain
  preserving auxiliary states, naturally produced by the GMS-GV1
  method, are used to define local bounds for the high-order method
  which is then made invariant domain preserving via a {\em convex
    limiting} process. Numerical tests confirm the second-order
  accuracy of the new GMS-GV2 method in the maximum norm, where 2
  stands for second-order.  The proposed convex limiting is generic
  and can be applied to other approximation techniques and other
  hyperbolic systems.
\end{abstract}

\begin{keywords}
  hyperbolic systems, Riemann problem,
  invariant domain, entropy inequality, high-order
  method, exact rarefaction, quasiconvexity, limiting, finite element
  method.
\end{keywords}

\begin{AMS}
65M60, 65M12, 35L65, 35L45  
\end{AMS}

\pagestyle{myheadings} \thispagestyle{plain} \markboth{J.-L. GUERMOND,
  M. NAZAROV, B. POPOV, I. TOMAS}{Convex limiting}

\section{Introduction}\label{sec:intro} 
The objective of the present work is to present an approximation
technique for the compressible Euler equations that is explicit in
time, second-order accurate in space and time, and invariant domain
preserving. The method is presented in the context of continuous
finite elements, but it is quite general and can be applied to other
discretization settings like discontinuous Galerkin and finite
volume techniques.  Like in many other high-order approximation methods, the
proposed technique consists of combining a first-order, invariant
domain preserving, and entropy satisfying approximation with a
high-order entropy consistent approximation. The high-order method is
made invariant domain preserving and formally entropy compliant by adapting the
artificial viscosity through a limitation process. One key novelty is
that the density, the internal energy and the specific entropy are
limited by using bounds that are in the domain of dependence of the
local data.  Another novelty is that the limiting bounds are local for
all the quantities, and these bounds are naturally satisfied by the
low-order solution.

The so-called Flux Transport Corrected method (FCT) introduced in
\cite{Boris_books_JCP_1973} for approximating the one-dimensional mass
conservation equation and later generalized to multi dimensions in
\cite{Zalesak_1979} are among the first successful techniques to
produce second-accuracy while imposing pointwise bounds such as
positivity of the density.  This methodology can be used to preserve
the maximum principle for any scalar conservation equation. We refer
the reader to \cite{TurekKuzmin2002,KuzminLoehnerTurek2004} for
reviews on FCT. Unfortunately, the FCT algorithm, as proposed in the
above references, is not well suited to enforce constraints that are
not affine. For instance it cannot be (easily) applied to guarantee
the positivity of the specific internal energy and the minimum
principle on the specific entropy since these are quasiconcave
constraints. (The said constraints can be made concave by
multiplication by the density as it is shown in
\S\ref{Sec:convexity_based_limitation}.)  In the context of finite
volumes, efficient second-order limitation techniques for the specific
internal energy and the specific entropy have been first proposed in
\cite{Khobalatte_Perthame_1994}, \cite{Perthame_Youchun_1994}, and
\cite{Perthame_Shu_1996}. These ideas have been extended to
Discontinuous Galerkin framework in a series of papers by
\cite{Zhang_Shu_2010,Zhang_Shu_2012}, \cite{Jiang_Liu_2017}.  The key
argument that is common to
\citep{Khobalatte_Perthame_1994,Perthame_Shu_1996,Perthame_Youchun_1994}
is to rely on convex combinations and concavity. In the present paper
we are going to build on these ideas and propose a general FCT-like
post-processing methodology to enforce general concave constraints for
the Euler equations.  Instead of limiting slopes or reconstructed
approximations, we adopt an algebraic point of view similar to FCT.
The method is presented in the context of continuous finite elements,
but since it is algebraic, it can be applied to finite volumes and
discontinuous Galerkin approximation techniques as well.

The paper is organized as follows. The problem is formulated in
\S\ref{Sec:preliminaries}. The finite element setting and the notation
are also introduced in this section. The time and space approximation
using continuous finite elements is described in
\S\ref{Sec:time_space_approximation}. Both the first-order, invariant
domain preserving, entropy satisfying method and the high-order
invariant domain violating method are detailed in this section. The
bulk of the novel material is reported in
\S\ref{Sec:convexity_based_limitation}. The main results of this
section are the local bounds given in
\eqref{rhominmaxdef}--\eqref{minimum_internal_energy} together with
Lemma~\ref{Lem:quasiconcave} and Lemma~\ref{Lem:compute_lij}.  The
performance of the proposed method is illustrated in
\S\ref{Sec:Numerical_illustrations}. 

\section{Preliminaries} \label{Sec:preliminaries} We introduce in this
section the Euler equations and the finite element setting.  Some
important properties of the Euler equations that are used later in the
paper are also recalled. The reader who is familiar with the Euler
equations, invariant domains, and the finite element theory is invited
to jump to \S\ref{Sec:time_space_approximation}.

\subsection{The Euler equations}
Let $d$ be the space dimension and $\Dom$ be an open polyhedral
domain in $\Real^d$. We consider the compressible Euler equations in
conservative form in $\Real^d$:
\begin{subequations}\label{Euler}
\begin{align}
  &\partial_t \rho + \DIV \bbm = 0, \label{Euler:mass}\\
  &\partial_t \bbm + \DIV (\bv\otimes \bbm) + \GRAD p  = 0, \label{Euler:momentum}\\
  &\partial_t E + \DIV (\bv(E+p)) = 0,\label{Euler:energy}\\
  & \rho(\bx,0)=\rho_0,\qquad \bbm(\bx,0)=\bbm_0,\qquad E(\bx,0)=E_0.
\label{Euler:init}
\end{align}
\end{subequations}
The independent variables are  $(\bx,t)\in \Dom\CROSS\Real_+$.
The dependent variables, henceforth called conservative variables, are
the density, $\rho$, the momentum, $\bbm$ and the total energy,
$E$. The quantity $\bv:=\rho^{-1} \bbm$ is the velocity of the fluid
particles. The pressure, $p$, is given by the equation of state which
we assume to be derived from a specific entropy,
$s:\Real_+ \CROSS \Real_+ \to \Real$, defined through the
thermodynamics identity:
$T \diff s := \diff e - p\rho^{-2} \diff \rho$, where
$e := \rho^{-1}E - \frac12 \bv^2$ is the specific internal energy.
For instance it is common to take
$s(\rho,e) - s_0=\log(e^{\frac{1}{\gamma-1}}\rho^{-1})$ for a polytropic ideal gas.
Using the notation $s_e(\rho,e):=\frac{\partial s}{\partial e}(\rho,e)$ and
$s_\rho(\rho,e) :=\frac{\partial s}{\partial \rho}(\rho,e)$, the equation of state
then takes the form $p:= - \rho^2 s_\rho s_e^{-1}$.  To
simplify the notation we introduce the conservative variable
$\bu:=(\rho,\bbm,E)\tr \in \Real^{d+2}$ and the flux
$\bef(\bu) := (\bbm, \bv\otimes \bbm + p\polI, \bv(E+p))\tr \in
\Real^{m \times (d+2)}$,
where $\polI$ is the $d\CROSS d$ identity matrix. To avoid 
an abuse of notation and ambiguities, we introduce
\begin{equation}
  \Phi(\bu) := s(\rho, e(\bu)),
\end{equation}
where $e(\bu) := \rho^{-1}E - \frac{\bbm^2}{2\rho}$,
$\bbm^2:= \|\bbm\|_{\ell^2}^2$ and $\|\cdot\|_{\ell^2}$ is the
Euclidean norm, \ie $\Phi$ is the specific entropy expressed as a function of
the conservative variables. Finally, we call internal
energy the quantity $\varepsilon := \rho e= E -\frac12 \rho\bv^2$. 

The convention adopted in this paper is that for any vectors $\ba$,
$\bb$, with entries $\{a_k\}_{k=1,\dots,d}$, $\{b_k\}_{k=1,\ldots,d}$,
the following holds: $(\ba\otimes\bb)_{kl} = a_kb_l$ and
$\DIV\ba = \sum_{k=1,\dots,d}\partial_{x_k} a_{k}$,
$(\GRAD\ba)_{kl}=\partial_{x_l} a_k$.
$\ba\SCAL\GRAD =\sum_{k=1,\dots,d} a_k\partial_{x_k}$. Moreover, for
any second-order tensor $\polg$, with entries
$\{\polg_{kl}\}_{k,l=1,\dots,d}$, we define
$(\DIV\polg)_k = \sum_{l=1,\dots,d} \partial_{x_l} \polg_{kl}$,
$(\polg\ba)_k= \sum_{l=1,\dots,d}\polg_{kl} a_l$,
$(\ba\tr \polg)_l= \sum_{l=1,\dots,d}a_k \polg_{kl}$.

To avoid technicalities regarding boundary conditions, we assume that
either periodic boundary conditions are enforced, or the initial data
is compactly supported and in that case we are interested in the
solution before the domain of influence of $(\rho_0,\bbm_0,E_0)$
reaches the boundary of $\Dom$, \ie homogeneous Dirichlet
boundary conditions are enforced. (Some details are given in \S\ref{Sec:Mach3}
on how to deal with realistic boundary conditions.)

\subsection{Intrinsic properties}\label{SubSNatConst} 
The well-posedness of \eqref{Euler} is an extremely difficult question
that is far beyond the scope of the present paper. But to make sense
of the approximation techniques to be presented in the rest of the
paper we are going to rely on the notion of solution of
one-dimensional Riemann problems which is more tractable.  For any
unit vector $\bn\in \Real^d$, we consider the following Riemann
problem:
\begin{equation}
 \label{def:Riemann_problem} 
  \partial_t \bu + \partial_x (\bef(\bu)\SCAL\bn)=0, 
\quad  (x,t)\in \Real\CROSS\Real_+,\qquad 
\bu(x,0) = \begin{cases} \bu_L, & \text{if $x<0$} \\ \bu_R,  & \text{if $x>0$}, \end{cases}
\end{equation}
and assume that there exists a so-called admissible set $\calA$ such
that, for any pair of states $(\bu_L,\bu_R)\in \calA\CROSS \calA$,
this problem has a unique physical (entropy) solution henceforth
denoted by $\bu(\bn,\bu_L,\bu_R)(x,t)$. This assumption holds true, at
least for the covolume equation of state, with
$\calA:=\{\bu\st \rho> 0, e> 0\}$, and we refer the reader to
\cite[Thm.~II.3.1]{Godlewski_Raviart_1996} for a similar statement
with more general equations of states. We now introduce notions of
invariant sets and invariant domains. (Our definition is slightly
different from those in
\cite{Chueh_Conley_Smoller_1977,Hoff_1985,Frid_2001}.)
\begin{definition}[Invariant set] \label{Def:invariant_set} We say that a set
  $B\subset \calA\subset \Real^m$ is invariant for
  \eqref{Euler} if for any pair $(\bu_L,\bu_R)\in B\CROSS B$,
 any unit vector $\bn\in\Real^d$, and any $t>0$, the space average over the Riemann fan of the entropy
  solution $\bu(\bn,\bu_L,\bu_R)(x,t)$
 remains in $B$ for all $t >0$.
\end{definition}
We are also going to make use of the notion of invariant domain for an
approximation process.  Let $\bX_h \subset L^1(\Real^d;\Real^m)$ be a
finite-dimensional approximation space and let
$S_h : \bX_h \ni \bu_h\longmapsto S_h(\bu_h)\in \bX_h$ be a mapping
over $\bX_h$. (Think of $S_h$ as being a one-time-step approximation
of \eqref{Euler}.)  Henceforth we abuse the language by saying that a
member of $\bX_h$, say $\bu_h$, is in the set $B\subset \Real^m$ when
actually we mean that $\{\bu_h(\bx) \st \bx \in\Real^d \}\subset B$.
\begin{definition}[Invariant domain]\label{Def:invariant_domain}
  A convex invariant set $B \subset \calA\subset \Real^m$ is said to
  be an invariant domain for the mapping $S_h$ if and only if for any
  state $\bu_h$ in $B$, the state $S_h(\bu_h)$ is also in $B$.
\end{definition}
It is known that the set
\begin{equation}
A_{s^{\min}}:= \{ (\rho,\bbm,E) \st \rho> 0,e > 0,s\ge s^{\min}\}
\end{equation}
is invariant for the Euler system for any $s^{\min}\in \Real$.  It is
also established in \cite[Thm.~8.2.2]{Serre_2000_II} that the set
$A_{s^{\min}}$ is convex, and it is shown in \cite[Thm.~7 and
8]{Frid_2001} that it is an invariant domain for the Lax-Friedrichs
scheme. The finite element method introduced in \cite{Guer2016} also
satisfies this invariant domain property; this finite element
construction is recalled in \S\ref{Sec:low_order}.  It is generally
admitted in the literature that physical solutions to \eqref{Euler}
should satisfy entropy inequalities.  More specifically, let
$f:\Real\to \Real$ be twice differentiable and be such that
\begin{equation}
  f'(s) > 0,\qquad f'(s) c_p^{-1} - f''(s) > 0,
\qquad \forall (\rho,e) \in \Real_+^2, \label{def_of_f}
\end{equation}
where $c_p(\rho,e)=T\partial_T s(p,T)$ is the specific heat at
constant pressure. It is shown in
\cite[Thm.~2.1]{Harten_Lax_Levermore_Morojoff_1998} that $\rho f(s)$
is strictly concave with respect to the conservative variables if and
only if \eqref{def_of_f} holds, \ie $\calA\ni\bu\mapsto \rho f(\Phi(\bu))$ is convex. 
Note that the so-called physical entropy
$S(\bu):=\rho s$ is obtained by setting $f(s)=s$. We say that a weak
solution to \eqref{Euler} is an entropy solution if it satisfies the
following inequality in the weak sense for every generalized entropy:
\begin{equation}
\partial_t(\rho f(s)) +\DIV (\bbm f(s)) \ge 0. \label{entropy_inequality}
\end{equation}
In particular it is known (at least for $\gamma$-law equation of
state) that the entropy solution to the Riemann problem
\eqref{def:Riemann_problem} satisfy \eqref{entropy_inequality}.
 Both the Lax-Friedrichs scheme and the finite element method introduced in 
 \citep{Guer2016} satisfy a discrete version of \eqref{entropy_inequality} for 
every generalized entropy. 

The objective of the present work is to construct an explicit,
second-order continuous finite element method that is consistent with
\eqref{entropy_inequality} and for which $A_{s^{\min}}$ is an
invariant domain.

\subsection{Finite element setting}
We are going to approximate the solution of \eqref{Euler} with
continuous Lagrange finite elements.  We introduce for this purpose a
shape-regular sequence of matching meshes $\famTh$ and assume that the
elements in each mesh are generated from a small collection of
reference elements denoted $\wK_1,\dots,\wK_\varpi$. In two space
dimensions for instance, the mesh $\calT_h$ could be composed of a
combination of parallelograms and triangles (\ie $\varpi=2$). In three
space dimensions, $\calT_h$ could also be composed of a combination of
triangular prisms, parallelepipeds, and tetrahedra ($\varpi=3$). Given
$K\in \calT_h$, the geometric transformation mapping $\wK_r$ to
$K\in \calT_h$ is denoted $T_K : \wK_r \longrightarrow K$.  We are
going to construct the approximation space by using some reference
Lagrange finite elements
$\{(\wK_r,\wP_r,\wSigma_r)\}_{1\le r\le \varpi}$, where the objects
$(\wK_r,\wP_r,\wSigma_r)$ are Ciarlet triples (we omit the index
$r\in\intset {1}{\varpi}$ in the rest of the paper to simplify the
notation).  Given a reference Lagrange element $(\wK,\wP,\wSigma)$, we
denote by $\{\wbx_l\}_{l\in \calL}$ the reference Lagrange nodes and
by $\{\wtheta_l\}_{l\in \calL}$ the reference shape functions, \ie
$\text{card}(\calL) = \text{dim}(\wP)=:\nf$ (note that the index $r$
has been omitted). $\wP$ is the reference approximation space (usually
a scalar-valued polynomial space) and $\wSigma$ is the set of the
Lagrange degrees of freedom.  Let $\polP_{l,d}$ be the vector space
composed of the $d$-variate polynomials of degree at most $l$. We
henceforth assume that there is $k\ge 1$ such that
$\polP_{k,d}\subset \wP.$ The reference degrees of freedom
$\{\wsigma_l\}_{l\in \calL}$ are such that
$\wsigma_l(\wp)=\wp(\wbx_l)$ for all $l\in \calL$ and all
$\wp\in\wP$. By definition $\wtheta_l(\wbx_{l'}) =\delta_{l l'}$, for
all $l.l'\in\calL$, which is turn implies the partition of unity
property: $\sum_{l\in\calL}\wtheta_l(\wbx) =1$, for all $\wbx \in\wK$.
Setting $\{\ww_l:=\int_{\wK}\theta_l\diff x\}_{l\in \calL}$, the
following quadrature is $(k+1)$-th order accurate since it is exact
for all $\wp \in \wP$:
$\int_\wK \wp(\wbx)\diff \wbx = \sum_{l\in \calL} \wp(\wbx_l) \ww_l$.
We henceforth assume that
\begin{equation}
\int_\wK \wtheta_l\diff \wbx =: \ww_l>0,\qquad l\in \calL. \label{positivity_lumped_mass}
\end{equation} 
There are numerous reference finite elements satisfying
\eqref{positivity_lumped_mass}.  For instance all the elements based
on $\polQ_{k,d}$ polynomials using tensor
product of Gauss-Lobatto points on quadrangles or hexahedra satisfy
\eqref{positivity_lumped_mass}. The question is slightly more delicate
for $\polP_{k,d}$ polynomials on simplices, but one can use Fekete
points (see \cite{Taylor_Wingate_2000}) or various variations thereof
for $k\ge 3$ (see \cite{Hesthaven_1998,Warburton_2006}). Note that the
standard $\polP_{1,d}$ Lagrange element satisfies
\eqref{positivity_lumped_mass} but $\polP_{2,d}$ does not for
$d\ge 2$.

We now introduce approximation spaces constructed as usual by using
the pullback by the geometric transformation. More precisely we define
the following scalar-valued and vector-valued finite element spaces:
\begin{align} \label{eq:Xh}
P(\calT_h) &=\{ v\in \calC^0(\Dom;\Real)\st 
v_{|K}{\circ}T_K \in \wP,\ \forall K\in \calT_h\},\qquad 
\bP(\calT_h) = [P(\calT_h)]^{d+2}.
\end{align}  
The global shape functions in $P(\calT_h)$, which we recall form a
basis of $P(\calT_h)$, are denoted by $\{\varphi_i\}_{i\in\calI}$, \ie
$\text{card}(\calI)=\text{dim}(P(\calT_h))$. The global Lagrange nodes
are denoted $\{\bx_i\}_{i\in \calI}$. Upon introducing the
connectivity array $\jj : \calT_h\CROSS \calL \longrightarrow \calI$,
we have $\varphi_{\jj(l,K)}(\bx) = \wtheta_l((T_K)^{-1}(\bx))$, and
$\bx_{\jj(K,l)} = T_K(\wbx_l)$, for all $l\in\calL$ and all
$K\in \calT_h$.  This implies that $\varphi_i(\bx_j)=\delta_{ij}$.
The local partition of unity property implies that
\begin{align}
\sum_{i\in\calI}\varphi_i(\bx) =1,\
\quad \forall \bx \in\Dom.  \label{partition_unity}
\end{align}

Upon defining $m_i:= \int_{\Dom} \varphi_i(\bx)\diff \bx$, the above
definitions imply that the following quadrature is $(k+1)$-th order
accurate $\int_\Dom v(\bx)\diff \bx = \sum_{i\in\calI} m_i v(\bx_i)$
since it is exact for all $v\in P(\calT_h)$.  The matrix with entries
$\int_{\Dom} \varphi_i(\bx)\varphi_j(\bx)\diff \bx$ is called the
consistent mass matrix and denoted by
$\calM\in \Real^{\Nglob\CROSS \Nglob}$.  Using the above quadrature
and the property $\varphi_i(\bx_j)=\delta_{ij}$, the integral
$\int_{\Dom} \varphi_i(\bx)\varphi_j(\bx)\diff \bx$ can be
approximated by $m_i$.  Note that \eqref{partition_unity} implies that
$\sum_{j\in\calI} m_{ij} = m_i$.  We henceforth denote by $\calM^L$
the diagonal matrix with entries $(m_i)_{i\in\calI}$ and refer to
$\calM^L$ as the lumped mass matrix. Note that \eqref{positivity_lumped_mass} 
implies that $m_i>0$ for all $i\in \calI$.

We denote by $\Dom_i$ the support of $\varphi_i$. Let $E$ be a union
of cells in $\calT_h$; we define $\calI(E)$ to be the set containing
the indices of all the shape functions whose support on $E$ is of
nonzero measure. The sets $\calI(K)$ and $\calI(\Dom_i)$ will
be invoked regularly; note in particular that the partition of unity
can be rewritten $\sum_{i\in\calI(K)} \varphi_i(\bx) =1$ for all
$\bx\in K$.

Upon denoting by $\|\cdot\|_{\ell^2}$ the Euclidean norm in $\Real^d$,
we introduce the following two quantities which will play an
important in the rest of paper:
\begin{equation} 
  \bc_{ij} := \int_\Dom \varphi_i \GRAD\varphi_j \dif
  x,\qquad \bn_{ij}:= \frac{\bc_{ij}}{\|\bc_{ij}\|_{\ell^2}}\qquad
  i,j\in\calI.
\label{def_of_cij}
\end{equation} 
Note that \eqref{partition_unity} implies
$\sum_{j\in\intset{1}{\Nglob}} \bc_{ij} =\mathbf{0}$. Furthermore, if
either $\varphi_i$ or $\varphi_j$ is zero on $\partial\Dom$, then
$\bc_{ij} = - \bc_{ji}$. In particular we have
$\sum_{i\in\intset{1}{\Nglob}} \bc_{ij} =\mathbf{0}$ if $\varphi_j$ is
zero on $\partial\Dom$.  This property will be used to establish
conservation.

\section{Time and space approximation}
\label{Sec:time_space_approximation}
We describe in this section the first-order technique and the
higher-order technique that will be used to construct the second-order
invariant domain preserving and entropy compliant method which is the
object of the present paper. The discussion is restricted to the
forward Euler time stepping since higher-order accuracy in time is
trivially achieved by using Strong Stability Preserving Runge-Kutta
time stepping.  All the numerical tests reported in
\S\ref{Sec:Numerical_illustrations} are done with the SSP RK(3,3)
method (three stages, third-order), see
\cite[Eq.~(2.18)]{Shu_Osher1988} and
\cite[Thm.~9.4]{Kraaijevanger_1991}.

\subsection{Low-order approximation (GMS-GV1)} \label{Sec:low_order}
The low-order method that we are going to use is fully described in
\citep{Guer2016} and is henceforth referred to as GMS-GV1 for
Guaranteed Maximum Speed method with first-order Graph Viscosity. Let
$\bu_{h}^0 = \sum_{i=1}^\Nglob \bsfU_i^0 \varphi_i \in \bP(\calT_h)$
be a reasonable approximation of the initial data $\bu_0$. Let $\dt$
be the time step, $t_n$ be the current time, and let us set
$t_{n+1}=t_n+\dt$ for some $n\in \polN$. Letting
$\bu_h^n=\sum_{i\in\calI}^\Nglob \bsfU_i^n \varphi_i$ be the space
approximation of $\bu$ at time $t_n$, we estimate the low-order
approximation
$\bu_h^{L,n+1}=\sum_{i\in\calI} \bsfU_i^{L,n+1} \varphi_i$ by setting
\begin{equation}
\label{def_dij_scheme}
  \frac{m_i}{\dt}(\bsfU_i^{L,n+1}-\bsfU_i^n)
  + \sum_{j\in \calI(\Dom_i)} \bef(\bsfU_j^n)\SCAL \bc_{ij}
  - d_{ij}^{L,n} (\bsfU^n_j-\bsfU^n_i)  = 0.
\end{equation}
The graph viscosity coefficients $d_{ij}^{L,n}$ are defined for all
$i \not= j \in \calI$ by
\begin{equation}
\label{Def_of_dij_I}
  d_{ij}^{L,n} := \max(\lambda_{\max}(\bn_{ij},\bsfU_i^n,\bsfU_j^n)
  \|\bc_{ij}\|_{\ell^2},
\lambda_{\max}(\bn_{ji},\bsfU_j^n,\bsfU_i^n) \|\bc_{ji}\|_{\ell^2}),
\end{equation}
where 
$\lambda_{\max}(\bn,\bsfU_L,\bsfU_R)$  is the maximum wave speed in the Riemann problem:
\begin{equation}
\partial_t\bw + \partial_x\bef^{\text{1D}}(\bw) =0,
\label{one_d_riemann_with_n}
\end{equation}
with data $\bw_L:=(\rho_L,m_L,\mathcal{E}_L)\tr$,
$\bw_R:=(\rho_R,m_R,\mathcal{E}_R)\tr$, where $m:= \bbm\SCAL \bn$,
$v:=m/\rho$, $\bbm^\perp := \bbm - (\bbm\SCAL \bn)\bn$,
$\mathcal{E} := E - \frac12 \frac{\|\bbm^\perp\|_{\ell^2}^2}{\rho}$,
and flux $\bef^{\text{1D}}(\bw) := (m,vm+p,v(\calE+p))\tr$.  A
guaranteed upper bound on $\lambda_{\max}(\bn,\bsfU_L,\bsfU_R)$ is
given in \citep[Rem.~2.8]{Guer2016} and \cite[Appendix
C]{Guermond_Popov_Fast_Riemann_2016} for the covolume equation of
state $p(1-b\rho)=(\gamma-1)e\rho$, with $b\ge 0$; the case $b=0$
corresponds to an ideal gas.  Let $v_L := \bbm_L\SCAL\bn/\rho_L$,
$v_R:= \bbm_R\SCAL\bn/\rho_R$ be the left and right speed and let
$c_L$, $c_R$ the left and right speed of sound. We recall in passing
that the widely used estimate $\max(|v_L|+c_L,|v_R|+c_R)$ is {\em not}
a a guaranteed upper bound of $\lambda_{\max}(\bn,\bsfU_L,\bsfU_R)$.
For the reader's convenience, we now recall the upper bound proposed
in \cite{Guermond_Popov_Fast_Riemann_2016}.  The definition of the
maximum speed
$\lambda_{\max}(\bn,\bsfU_L,\bsfU_R)=\lambda_{\max}(\bw_L,\bw_R)$ is
as follows:
\begin{equation}
\lambda_{\max}(\bw_L,\bw_R)=
\max((\lambda_1^-)_-,(\lambda_3^+)_+),
\end{equation}
where $\lambda_1^-$ and $\lambda_3^+$ are the two extreme wave speeds
enclosing the Riemann fan of the one-dimensional
problem~\eqref{one_d_riemann_with_n}; these two extreme wave speeds are
given by
\begin{align}
\lambda_1^-(p^*)=v_L - c_L\left(
1+\frac{\gamma+1}{2\gamma}\left(\frac{p^*-p_L}{p_L}\right)_+
\right)^\frac12, 
\\
\lambda_3^+(p^*)=v_R + c_R\left(
1+\frac{\gamma+1}{2\gamma}\left(\frac{p^*-p_R}{p_R}\right)_+
\right)^\frac12,
\end{align}
where the intermediate pressure $p^*$ is obtained by solving a
nonlinear problem.  Here we use the notation $z_+:=\max(0,z)$ and we
recall that the local sound speed for the covolume gas is
$c=\sqrt{\frac{\gamma p}{\rho(1-b\rho)}}$.  Note that the exact value
of value of $\lambda_{\max}(\bw_L,\bw_R)(p^*)$ is a monotone
increasing function of $p^*$.  Therefore, instead of computing the
exact pressure $p^*$ which requires an iterative process, one can use
an explicit upper bound $\tilde{p}^*\ge p^*$. Then, a guaranteed upper
bound on $\lambda_{\max}(\bw_L,\bw_R)(p^*)$ is
$\lambda_{\max}(\bw_L,\bw_R)(\tilde{p}^*)$.  One such upper bound
valid for $1<\gamma\le \frac53$ is established in
\cite[Lem.~4.3]{Guermond_Popov_Fast_Riemann_2016} and the value of
$\tilde{p}^*$ in question is given by the so-called two-rarefaction
approximation:
\begin{equation}
\tilde{p}^*= \left(
\frac{c_L(1-b\rho_L)+c_R(1-b\rho_R)-\frac{\gamma-1}{2}(v_R-v_L)}
{ c_L(1-b\rho_L)\ p_L^{-\frac{\gamma-1}{2\gamma}}
+ c_R(1-b\rho_R)\ p_R^{-\frac{\gamma-1}{2\gamma} } }
\right)^{\frac{2\gamma}{\gamma-1}}.
\end{equation}

\begin{remark}[Graph Viscosity]
  Note that the expression $- d_{ij}^{L,n} (\bsfU^n_j-\bsfU^n_i)$ in
  \eqref{def_dij_scheme} is called weighted Graph Laplacian in graph
  theory. We call the weights  $d_{ij}^{L,n}$ Graph Viscosity (or artificial viscosity).
\end{remark}

\begin{remark}[Other discretizations]
  Note that the expression \eqref{def_dij_scheme} that is used to
  compute the update $\bsfU_i^{n+1}$ is quite generic; many other
  discretizations of the Euler equations can be put in this abstract
  form. The notion of continuous finite element only intervenes in the
  definition of the vectors $\bc_{ij}$, the index set $\calI(\Dom_i)$,
  and the lumped mass matrix coefficients $m_i$. Other discretizations
  lead to other forms for $\bc_{ij}$, $\calI(\Dom_i)$, and 
  $m_i$. Almost everything that is said in the rest of the paper can be applied 
 to these discretizations as well.
\end{remark}

\subsection{The intermediate limiting
  states} \label{Sec:intermediate_limiting_states} We now deduce from
\eqref{def_dij_scheme} intermediate local states that will be useful
to limit the yet to be defined high-order solution.  Using that
$\sum_{j\in \calI(\Dom_i)} \bc_{ij}=0$, we rewrite
\eqref{def_dij_scheme} as follows:
\begin{align*}
  \frac{m_i}{\dt}\bsfU_i^{L,n+1}\! 
= \bsfU_i^n\Big(\frac{m_i}{\dt} - \!\!\!\!\sum_{i\ne j\in\calI(\Dom_i)}\!\!\!\!2d_{ij}^{L,n}\Big) 
  + \!\!\!\sum_{i\ne j\in\calI(\Dom_i)}\!\!\!\! (\bef(\bsfU_i^n) -\! \bef(\bsfU_j^n)) \SCAL \bc_{ij}
  + d_{ij}^{L,n} (\bsfU^n_j + \bsfU^n_i).
\end{align*}
Then, upon introducing the quantities
\begin{align}
\label{def_barstates}
\overline{\bsfU}_{ij}^{n+1} := \frac{1}{2}(\bsfU^{n}_i + \bsfU^{n}_j) 
-(\bef(\bsfU_j^n) - \bef(\bsfU_i^n))\cdot  \frac{\bc_{ij}}{2 d_{ij}^{L,n}},
\end{align}
with the the convention $\overline{\bsfU}_{ii}^{n+1} = \bsfU^{n}_i$,
and the notation
$\overline{\bsfU}_{ij}^{n+1} = (
\overline{\rho}_{ij}^{n+1},\overline{\bbm}_{ij}^{n+1},\overline{E}_{ij}^{n+1})\tr$
the low-order update $\bsfU_i^{L,n+1}$, can be represented as a convex
combination as follows:
\begin{align}
\label{def_dij_scheme_convex}
\bsfU_i^{L,n+1} = \bigg(1 - \sum_{i\not=j \in \calI(\Dom_i)} \frac{2 \dt d_{ij}^{L,n}}{m_i} \bigg) \overline{\bsfU}_{ii}^{n}
+  \sum_{i\not=j \in \calI(\Dom_i)} \bigg(\frac{2 \dt d_{ij}^{L,n}}{m_i} \bigg) \overline{\bsfU}_{ij}^{n+1},
\end{align}
under the appropriate CFL condition. Lemma~2.1 from
\citep{Guermond_Popov_Fast_Riemann_2016}, inspired by
\cite[\S5]{Perthame_Shu_1996} and
\cite[Eq.~(2.7)]{Nessyahu_Tadmor_1990}, is that the intermediate state
$\overline{\bsfU}_{ij}^{n+1}$ is a space average of the Riemann
solution $\bu(\bn_{ij},\bsfU_i^n,\bsfU_j^n)$; that is to say,
$\overline{\bsfU}_{ij}^{n+1}=
\int_{-\frac12}^{\frac12}\bu(\bn_{ij},\bsfU_i^n,\bsfU_j^n)(x,t)\dif x$
with $t:=\|\bc_{ij}\|_{\ell^2}/(2 d_{ij}^{L,n})$ provided
$t\lambda_{\max}(\bn_{ij},\bsfU_i^n,\bsfU_j^n)\le \frac12$.  (Let us emphasize tat the
time $t$ is related to the Riemann
problem~\eqref{one_d_riemann_with_n}, this time has noting to do with
that of the PDE~\eqref{Euler}.) Note that the definition of the
low-order graph viscosity \eqref{Def_of_dij_I} does imply
that $t\lambda_{\max}(\bn_{ij},\bsfU_i^n,\bsfU_j^n)\le \frac12$.  An
immediate consequence of this structure is that
$\overline{\bsfU}_{ij}^{n+1}$ has positive density, positive internal
energy and satisfies the following minimum principle on the specific
entropy:
$\Phi(\overline{\bsfU}_{ij}^{n+1}) \ge
\min(\Phi(\bsfU_i^n),\Phi(\bsfU_j^n))$.
Upon denoting
$d_{ii}^{L,n} := -\sum_{i\ne j\in \calI(\Dom_i)} d_{ij}^{L,n}$ for
brevity, another consequence of the above observation is the following
result.
\begin{theorem}[Local invariance/entropy inequality] \label{Thm:UL_is_invariant} 
Let $i\in\calI$. Assume \eqref{positivity_lumped_mass} and
 $1+2\dt \frac{d_{ii}^{L,n}}{m_i}\ge 0$.
\manuallabel{Item1:Thm:UL_is_invariant}{\textup{i}}
Let $s^{\min}_i=\min_{j\in\calI(\Dom_i)}\Phi(\bsfU_j^n)$,
then $\bsfU_i^{L,n+1} \in A_{s^{\min}_i}$.
\manuallabel{Item2:Thm:UL_is_invariant}{\textup{ii}}
 Let $(\eta:=\rho f(\Phi),\bq:=\bbm f(\Phi))$ be a generalized entropy pair
  for \eqref{Euler}. Then the following local
  entropy inequality holds:
\begin{multline*}
  \frac{m_i}{\dt} (\eta(\bsfU_i^{L,n+1}) - \eta(\bsfU_i^n))   
   + \int_\Dom \DIV\bigg(\sum_{j\in\calI(\Dom_i)}\bq(\bsfU_j^n)\varphi_j\bigg) \varphi_i \diff \bx \\
-\sum_{j\in \calI(\Dom_i)} d_{ij}^{L,n} \big(\eta(\bsfU_{j}^{n})-\eta(\bsfU_{j}^{n})\big) \ge 0. 
\end{multline*}
\end{theorem}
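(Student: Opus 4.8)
The plan is to exploit the convex‑combination structure~\eqref{def_dij_scheme_convex} together with the two facts recalled in~\S\ref{Sec:intermediate_limiting_states}: for $i\ne j$ the intermediate state $\overline{\bsfU}_{ij}^{n+1}$ is the average over $[-\frac12,\frac12]$ of the entropy solution $\bu(\bn_{ij},\bsfU_i^n,\bsfU_j^n)$ of the Riemann problem~\eqref{def:Riemann_problem} at the auxiliary time $t_{ij}:=\|\bc_{ij}\|_{\ell^2}/(2d_{ij}^{L,n})$, and the bound $t_{ij}\lambda_{\max}(\bn_{ij},\bsfU_i^n,\bsfU_j^n)\le\frac12$ holds because $d_{ij}^{L,n}\ge\lambda_{\max}(\bn_{ij},\bsfU_i^n,\bsfU_j^n)\,\|\bc_{ij}\|_{\ell^2}$ by~\eqref{Def_of_dij_I}. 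First I would record that the hypotheses $m_i>0$ (from~\eqref{positivity_lumped_mass}), $d_{ij}^{L,n}\ge 0$ for $i\ne j$, and $1+2\dt\, d_{ii}^{L,n}/m_i\ge 0$ force the coefficients in~\eqref{def_dij_scheme_convex} to be nonnegative and to sum to one, so that $\bsfU_i^{L,n+1}$ is a genuine convex combination of $\overline{\bsfU}_{ii}^{n+1}=\bsfU_i^n$ and of $\{\overline{\bsfU}_{ij}^{n+1}\}_{i\ne j\in\calI(\Dom_i)}$.

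For~\ref{Item1:Thm:UL_is_invariant} it then suffices to show $\overline{\bsfU}_{ij}^{n+1}\in A_{s^{\min}_i}$ for every $j\in\calI(\Dom_i)$ and to invoke the convexity of $A_{s^{\min}_i}$ \cite[Thm.~8.2.2]{Serre_2000_II}. This membership is exactly the property recorded at the end of~\S\ref{Sec:intermediate_limiting_states} ($\overline{\bsfU}_{ij}^{n+1}$ has positive density, positive internal energy, and $\Phi(\overline{\bsfU}_{ij}^{n+1})\ge\min(\Phi(\bsfU_i^n),\Phi(\bsfU_j^n))\ge s^{\min}_i$); alternatively one gets it from Definition~\ref{Def:invariant_set} applied to the invariant set $A_{s^{\min}_i}$ — the endpoints $\bsfU_i^n,\bsfU_j^n$ lie in $\calA$ by the standing assumption and have specific entropy $\ge s^{\min}_i$ by the definition of $s^{\min}_i$ — combined with the fact that outside the Riemann fan the solution equals $\bsfU_i^n$ or $\bsfU_j^n$, so that the average over $[-\frac12,\frac12]$ is itself a convex combination of members of $A_{s^{\min}_i}$. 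The conclusion then follows from~\eqref{def_dij_scheme_convex} and convexity.

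For~\ref{Item2:Thm:UL_is_invariant} (with $\eta(\bsfU_j^n)-\eta(\bsfU_i^n)$ in the last summand) the crux is a per‑edge entropy inequality for the states $\overline{\bsfU}_{ij}^{n+1}$. Since $\bu(\bn_{ij},\bsfU_i^n,\bsfU_j^n)$ satisfies~\eqref{entropy_inequality}, I would integrate $\partial_t\eta+\partial_x(\bq\SCAL\bn_{ij})\ge 0$ over $(x,\tau)\in[-\frac12,\frac12]\times[0,t_{ij}]$; because the Riemann fan stays inside $(-\frac12,\frac12)$ the boundary entropy fluxes are $\bq(\bsfU_j^n)\SCAL\bn_{ij}$ at $x=\frac12$ and $\bq(\bsfU_i^n)\SCAL\bn_{ij}$ at $x=-\frac12$, which bounds $\int_{-\frac12}^{\frac12}\eta(\bu(x,t_{ij}))\diff x$ from below by $\frac12\bigl(\eta(\bsfU_i^n)+\eta(\bsfU_j^n)\bigr)-t_{ij}\bigl(\bq(\bsfU_j^n)-\bq(\bsfU_i^n)\bigr)\SCAL\bn_{ij}$. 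Jensen's inequality for the average, using the concavity of $\eta=\rho f(\Phi)$ on $\calA$ (condition~\eqref{def_of_f}, via \cite[Thm.~2.1]{Harten_Lax_Levermore_Morojoff_1998}), gives $\eta(\overline{\bsfU}_{ij}^{n+1})\ge\int_{-\frac12}^{\frac12}\eta(\bu(x,t_{ij}))\diff x$, whence, after substituting the value of $t_{ij}$ and using $\bc_{ij}=\|\bc_{ij}\|_{\ell^2}\bn_{ij}$,
\[
\eta(\overline{\bsfU}_{ij}^{n+1})\ \ge\ \frac12\bigl(\eta(\bsfU_i^n)+\eta(\bsfU_j^n)\bigr)\ -\ \frac{1}{2\,d_{ij}^{L,n}}\,\bc_{ij}\SCAL\bigl(\bq(\bsfU_j^n)-\bq(\bsfU_i^n)\bigr).
\]
I would then apply Jensen once more, now to~\eqref{def_dij_scheme_convex} and again by concavity of $\eta$, substitute the above bound, and simplify using $d_{ii}^{L,n}=-\sum_{i\ne j}d_{ij}^{L,n}$, $\sum_{j\in\calI(\Dom_i)}\bc_{ij}=\mathbf{0}$, and the identity $\sum_{j}\bc_{ij}\SCAL\bq(\bsfU_j^n)=\int_\Dom\varphi_i\,\DIV\bigl(\sum_{j}\bq(\bsfU_j^n)\varphi_j\bigr)\diff\bx$; multiplying through by $m_i/\dt>0$ then gives the stated inequality.

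The step I expect to be the main obstacle is the per‑edge estimate, and within it the bookkeeping for the $d$ transverse momentum components carried by the $(d+2)$‑dimensional Riemann solution: one must check that the specific internal energy — hence $\Phi$ — of the full state coincides with that of the reduced one‑dimensional state $\bw$ of~\eqref{one_d_riemann_with_n}, and that the entropy flux in the direction $\bn_{ij}$ equals $(\bbm\SCAL\bn_{ij})f(\Phi)$, whose evolution is governed entirely by~\eqref{one_d_riemann_with_n}. Once that is settled the remainder is routine, and the reassembly in the last display telescopes cleanly thanks to the partition‑of‑unity identities for the $\bc_{ij}$ and the convention $d_{ii}^{L,n}=-\sum_{i\ne j}d_{ij}^{L,n}$.
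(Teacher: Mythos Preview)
Your proof sketch is correct and follows exactly the argument of \cite[Thm.~4.7]{Guer2016} to which the paper defers (no inline proof is given here). The convex-combination structure~\eqref{def_dij_scheme_convex}, the per-edge entropy bound obtained by integrating~\eqref{entropy_inequality} over the Riemann fan together with Jensen's inequality from the concavity of $\eta$, and the final reassembly using $\sum_j\bc_{ij}=\mathbf{0}$ are precisely the ingredients of that proof; your observation that the last summand should read $\eta(\bsfU_j^n)-\eta(\bsfU_i^n)$ is also correct.
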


A practical interpretation of item \eqref{Item1:Thm:UL_is_invariant}
is that the low-order solution $\bsfU_i^{L,n+1}$ has positive density,
positive internal energy and satisfies the local minimum principle on
the specific entropy. Item~\eqref{Item2:Thm:UL_is_invariant} shows
that this solution is also entropy satisfying in some discrete sense.
This result is proved in \citep[Thm.~4.7]{Guer2016} in a more general
setting for any hyperbolic system with a convex entropy. Note that in
mathematical papers the entropies are generally assumed to be convex
whereas the physical generalized entropies $\rho f(\Phi)$ are concave
(this is just a matter of sign convention).  Hence, Theorem~4.7 in
\citep{Guer2016} can be applied with $-\rho f(\Phi)$.

\subsection{Smoothness-based approximation} \label{Sec:smoothness_based_viscosity} In this
section and the following one we introduce high-order approximation
techniques that will provide us with a provisional high-order solution
$\bsfU_j^{H,n+1}$. The method presented in this section
is easy to implement but is inherently only second-order accurate in space.
  
We introduce a technique to reduce the graph viscosity that is based
on a measure of the local smoothness of the solution in the spirit of
the finite volume literature (see
\eg\cite[Eq. (12)]{Jameson_aiaa_1981} and see the second formula in
the right column of page~1490 in \cite{Jameson_aiaa_2015}). Given a
scalar-valued function $g$ and its finite element interpolant
$g_h=\sum\sfG_i\varphi_i$, and denoting
$\epsilon_i = \epsilon \max_{j\in \calI(\Dom_i)} \sfG_j$ where
$\epsilon=10^{-\frac{16}{2}}$ in double precision arithmetic, we
define the smoothness indicator
\begin{equation}
  \alpha_i(g_h):= \frac{\left|\sum_{j\in\calI(S_i)} 
\beta_{ij}(\sfG_j-\sfG_i)\right|}{\max(\sum_{j\in\calI(S_i)} \beta_{ij}|\sfG_j-\sfG_i|,\epsilon_i)},
  \label{def_of_alpha_in}
\end{equation}
where the real numbers $\beta_{ij}$ are assumed to be positive.  One
can use the parameters $\beta_{ij}$ to make $\alpha_i=0$ if $g_h$ is
linear on the support of the shape function $\varphi_i$, this property
is called linearity-preserving (see \cite{Berger:2005:ASL} for a
review on linearity-preserving limiters in the finite volume
literature). Note that $\alpha_i\in [0,1]$ for all
$i\in\intset{1}{\Nglob}$ and $\alpha_i=1$ if $\sfG_i$ is a local
extrema of $g_h$.  Moreover, if the coefficients $\beta_{ij}$ are
defined so that $\alpha_i=0$ if $g_h$ is linear on $S_i$, then the
numerator of \eqref{def_of_alpha_in} behaves like
$h^2\|D^2 g(\bxi)\|_{\ell^2(\Real^{d\CROSS d})}$ at some point $\bxi$,
whereas the denominator behaves like
$h\|\GRAD g(\bzeta)\|_{\ell^2(\Real^d)}$ at some point
$\bzeta$. Therefore, we have
$ \alpha_i \approx h \|D^2 g(\bxi)\|_{\ell^2(\Real^{d\CROSS
    d})}/\|\GRAD g(\bzeta)\|_{\ell^2(\Real^d)}$,
that is to say $\alpha_i$ is of order $h$ in the regions where $g$ is
smooth and does not have a local extremum. In the computations
reported at the end of the paper we have taken $\beta_{ij}=1$. 

One choice for $g$ that we consider in some numerical tests reported
at the end of the paper consists using
$g(\bu) = \rho \Phi(\bu)= S(\bu)$, which is the mathematical entropy
(up to a sign). Other options consists of using generalized entropies
of the Euler equations, $g(\bu)=\rho f(\Phi(\bu))$.  In particular,
taking $f(s)=1$, gives $g(\bu)=\rho$, which is an extreme case of
generalized entropy; it is extreme in the sense that $-g(\bu)$ is
convex but not strictly convex. Note in passing that it is shown in
\cite{guermond_popov_positivity} that using $g(\bu)=\rho$ guarantees
positivity of the density.  Another option, which is somewhat similar
to \cite[Eq. (12)]{Jameson_aiaa_1981},
\cite[p.~1490]{Jameson_aiaa_2015}, consists of taking $g(\bu)=p$.
Note however that it might be better 
to take $p^{\frac1\gamma}$ to be entropy consistent, since
$p^{\frac1\gamma}$  is an extreme generalized entropy for polytropic gas as
shown in \cite[Eq.~(2.10a)]{Harten_1983}. Let us emphasize that strict
convexity of the entropy is not needed for the purpose of the present paper.

Let $\psi\in C^{0,1}([0,1];[0,1])$ be any positive function such that
$\psi(1)=1$.  The high-order smoothness-based graph viscosity is
defined by setting
\begin{equation} 
d_{ij}^{H,n} :=d_{ij}^{L,n}\max(\psi(\alpha^n_i),\psi(\alpha^n_j)), 
\quad d_{ii}^{H,n} := -\sum_{i\ne j\in \calI(\Dom_i)} d_{ij}^{H,n},
\label{def_dij_smooth}
\end{equation}
where $\alpha^n_i:=\alpha_i(g^n_h)$. A typical choice for $\psi$
consists of setting $\psi(\alpha)=\alpha^2$. Then the provisional
high-order approximation is computed as follows:
\begin{equation} 
\sum_{j\in \calI(\Dom_i)} \frac{m_{ij}}{\dt}(\bsfU_j^{H,n+1}-\bsfU_j^n)
  + \sum_{j\in \calI(\Dom_i)} \bef(\bsfU_j^n)\SCAL \bc_{ij}
  - d_{ij}^{H,n} (\bsfU^n_j-\bsfU^n_i)  = 0.
\label{def_high_order_scheme_smooth}
\end{equation}
Note that we use the consistent mass matrix to reduce dispersion error
since it is known that the use of the consistent mass matrix 
corrects the dominating dispersion error (at least for piecewise
linear approximation), see
\cite{FLD:FLD719,gresho1998incompressible,Guermond_pasquetti_2013,Thompson_2016}. The
beneficial effects of the consistent mass matrix are particularly
visible when solving problems with non-smooth solutions, see \eg
\citep[Fig.~5.5]{Guermond_pasquetti_2013}.

\subsection{Entropy viscosity commutator} \label{Sec:entropy_residual}
We now introduce a method that is formally high-order for any
polynomial degree, contrary the one introduced in
\S\ref{Sec:smoothness_based_viscosity}.  Our objective is to construct
a high-order method that is entropy consistent and close to be
invariant domain preserving. We do not want to rely on the yet to be
explained limiting process to enforce entropy consistency. We refer
the reader to Lemma~3.2, Lemma~4.4 and \S6.1 in
\cite{guermond_popov_second_order_2018} and \cite[\S5.1]{Guer2016} for
counter-examples of methods that are invariant domain preserving but
entropy violating. The heuristics we have in mind is that limitation should be
understood as a light polishing applied to a method that is already entropy
consistent and almost invariant domain preserving.  Following an idea
introduced in
\cite{Guermond_pasquetti_popov_2011,Guermond_Nazarov_Popov_Yong_2014},
we construct a high-order graph viscosity that is entropy consistent by estimating a
non-dimensional entropy residual. However, contrary to the techniques
introduced in
\citep{Guermond_pasquetti_popov_2011,Guermond_Nazarov_Popov_Yong_2014},
we do not want the time discretization to interfere with the
estimation of the residual, and we now propose a slightly different
approach. Given the current approximation $\bu_h^n$, we estimate the
next inviscid approximation by setting
$\bsfU_i^{G,n+1} := \bsfU_i^n - \frac{\dt}{m_i}
\sum_{j\in\calI(\Dom_i)} \bef(\bsfU_j^n)\bc_{ij}$.
Essentially $\bsfU_i^{G,n+1}$ is the Galerkin approximation of
$\bu(t^{n+1})$. Let $(\eta(\bv),\bF(\bv))$ be an entropy pair for
\eqref{Euler}. We estimate the entropy residual for the degree of
freedom $i$ by computing
$\frac{m_i}{\dt} (\bsfU_i^{G,n+1}-\bsfU_i^n)\SCAL\eta'(\bsfU_i^n) +
\sum_{j\in\calI(\Dom_i)} \bF(\bsfU_j^n)\bc_{ij}$.
But using the definition of $\bsfU_i^{G,n+1}$, this is equivalent to
computing
$\sum_{j\in\calI(\Dom_i)} ( \bF(\bsfU_j^n) - \eta'(\bsfU_i^n)\tr
\bef(\bsfU_j^n))\bc_{ij}$.
Then, upon setting
$\eta_i^{\min} := \min_{j\in\calI(\Dom_i)} \eta(\bsfU_j^n)$,
$\eta_i^{\max} := \max_{j\in\calI(\Dom_i)} \eta(\bsfU_j^n)$,
$\epsilon_i:=\epsilon \max(|\eta_i^{\max}|,|\eta_i^{\min}|)$, we adopt
the following definition
\begin{equation}
  R_i^n = \frac{1}{\max(\eta_i^{\max} -\eta_i^{\min},\epsilon_i)} \sum_{j\in\calI(\Dom_i)} 
(\bF(\bsfU_j^n) -
  \eta'(\bsfU_i^n)\tr \bef(\bsfU_j^n)) \SCAL \bc_{ij}, \label{def_of_Rin}
\end{equation}
where $\epsilon$ a tiny number that avoids division by zero when the
entropy is constant over $\Dom_i$.  In practice we take
$\epsilon=10^{-\frac{16}{2}}$ in double precision arithmetic. Note
that $R_i^n$ can be interpreted as a commutator.  More specifically
$R_i^n$ can be rewritten as follows
$\frac{1}{\max(\eta_i^{\max} -\eta_i^{\min},\epsilon_i)} \int_\Dom
(\DIV(\Pi_h\bF(\bu_h^n)) - \eta'(\bsfU_i^n)\tr \DIV(\Pi_h(\bef(\bu_h^n))
))\varphi_i \diff \bx$
where, up to two slight abuses of notation,
$\Pi_h : C^0(\Dom)\to P(\calT_h)$ is the Lagrange interpolation
operator.  Note in passing that $R_i^n=0$ in the hypothetical case
that $\eta:\Real^{d+2}\to \Real$ is linear.

The high-order graph viscosity (or entropy
viscosity, EV) is defined by
\begin{equation}
d_{ij}^{H,n} = \min(d_{ij}^{L,n},\max(|R_i^n|,|R_j^n|)),  
\quad d_{ii}^{H,n} := -\sum_{i\ne j\in \calI(\Dom_i)} d_{ij}^{H,n},
\label{def_dij_EV}
\end{equation}
and the provisional high-order approximation is computed as follows:
\begin{equation}
\label{def_high_order_scheme_EV}
  \sum_{j\in \calI(\Dom_i)} \frac{m_{ij}}{\dt}(\bsfU_j^{H,n+1}-\bsfU_j^n)
  + \sum_{j\in \calI(\Dom_i)} \bef(\bsfU_j^n)\SCAL \bc_{ij}
  - d_{ij}^{H,n} (\bsfU^n_j-\bsfU^n_i)  = 0.
\end{equation}
Note again that we use the consistent mass matrix to reduce dispersion
errors as explained in \S\ref{Sec:smoothness_based_viscosity} .

\begin{remark}[Scaling of $R_i^n$]
  Let us now convince ourselves that $R_i^n$ is at least one order
  smaller (in term of mesh size) than $d_{ij}^{L,n}$. Let use denote
  $F''_{\max} $ and $f''_{\max} $ the maximum over the convex hull
  $\text{conv}(\bsfU_j^n)_{j\in \calI(\Dom_i)}$ of the matrix norm
  (say the norm induced by the Euclidean norm in $\Real^{d+2}$) of the
  Hessians $D^2\bF$ and $D^2 f$. Then, denoting by $N_i^n$ the
  numerator in \eqref{def_of_Rin}, and recalling that
  $D\bF(\bsfU) = \eta'(\bsfU)\tr D\bef(\bsfU)$, we have
\begin{align*}
\|N_i^n\|_{\ell^2} &= \|\sum_{j\in\calI(\Dom_i)}(\bF(\bsfU_j^n) -\bF(\bsfU_i^n) -
  \eta'(\bsfU_i^n)\tr (\bef(\bsfU_j^n)-\bef(\bsfU_i^n)\SCAL \bc_{ij}\|_{\ell^2} \\
& \le \frac12 (F''_{\max}  + \eta'(\bsfU_i^n) f''_{\max}  ) 
\max_{j\in\calI(\Dom_i)}\|\bc_{ij}\|_{\ell^2}\sum_{j\in\calI(\Dom_i)}
\|\bsfU_j^n-\bsfU_i^n\|_{\ell^2}^2
.
\end{align*}
Assuming that $\eta'$ is not zero over $\Dom_i$ and denoting by
$\eta'_{\min}$ the minimum of $\|\eta'\|_{\ell^2}$ over
$\text{conv}(\bsfU_j^n)_{j\in \calI(\Dom_i)}$, the quantity
$\eta'_{\min}
\sum_{j\in\calI(\Dom_i)}\|\bsfU_j^n-\bsfU_i^n\|_{\ell^2}$
is a lower bound for the denominator in \eqref{def_of_Rin}.
The conclusion follows readily.
\end{remark}

\begin{remark}[Choice of high-order graph viscosity] 
  One advantage we see in the entropy viscosity \eqref{def_dij_EV}
  over the smoothness-based viscosity \eqref{def_dij_smooth} is that,
  in addition to being consistent for any polynomial degree, it is
  also consistent with at least one entropy inequality. That is to say
  the viscosity is large when the entropy production is large and it
  is small otherwise. In any case, we have observed that
  \eqref{def_dij_EV} always gives a scheme that is more robust than
  \eqref{def_dij_smooth} albeit being slightly more oscillatory. We
  refer the reader to \cite[\S6.5]{guermond_popov_second_order_2018}
  where this issue is discussed in detail.
\end{remark}

\begin{remark}[Entropy]
  We have found in our numerical experiments that using
  $p^{\frac{1}{\gamma}}$ for polytropic gases is a very good choice to
  construct the entropy residual since it minimizes dissipation across
  contact discontinuities.  Recall that $p^{\frac1\gamma}$ is indeed
  an entropy in the case of polytropic gases. All the numerical tests
  reported at the end of the paper are done with this entropy.
\end{remark}
    
\begin{remark}[Entropy ansatz]
  In realistic applications the equation of state is often tabulated
  and the entropy $\Phi(\bsfU)$ may be either unavailable or costly to
  estimate. We have found that the pressure may be used as an ansatz for the entropy;
one can then replace \eqref{def_of_Rin} by
  $R_i^n = \frac{1}{\max(\sfP_i^{\max} -\sfP_i^{\min},\epsilon_i)}
  \sum_{j\in\calI(\Dom_i)} \bsfV_i^n\SCAL \bc_{ij}(\sfP_j^n -
  D\sfP_i^n \SCAL \bsfU_j^n )$
  with $\epsilon_i = \epsilon \max(|\sfP_i^{\max}|, |\sfP_i^{\min}|)$.
\end{remark}

\section{Quasiconcavity-based limitation}
\label{Sec:convexity_based_limitation}
In this section we discuss the bounds we want the numerical solution
$\bsfU_i^{n+1}$ to satisfy, and we develop a novel limiting technique
that is convexity-based and does not invoke arguments like
linearization, worst-case scenario estimates, a posteriori fixes, or
auxiliary discontinuous spaces as often done in the literature. This
technique takes its roots in \cite{Khobalatte_Perthame_1994},
\cite{Perthame_Youchun_1994}, and \cite{Perthame_Shu_1996}. We also
refer to \cite{Zhang_Shu_2010,Zhang_Shu_2012}, \cite{Jiang_Liu_2017}
for extensions in the context of the Discontinuous Galerkin
approximation.

\subsection{Bounds and quasiconcavity} \label{Sec:bounds}
Since the high-order update $\bsfU_i^{H,n+1}$ (using either
\eqref{def_high_order_scheme_smooth} or
\eqref{def_high_order_scheme_EV}) is not guaranteed to be oscillation
free and to preserve physical bounds, some form of
limitation must applied. The question is now the following: what should be limited
and how?  Whichever representation is chosen for the dependent
variable (conservative, primitive, or characteristic variables), the
Euler equations are not known to satisfy any maximum or minimum
principle, with the exception of the minimal principle on the specific
entropy. Despite this fundamental negative result and with varying
levels of success, a number of techniques have been
proposed over the years in the finite element literature to enforce
some kinds of discrete maximum principles (see for instance
\cite{Boris_books_JCP_1973,Zalesak_1979,Lohner1987, Kuzmin2005,
  Zalesak2005}, \cite{Lohm2016}). Some of these limiting techniques
enforce properties that are not necessarily satisfied by the Euler
equations, or in the best case scenario, satisfied by the first-order
method of choice (usually a Lax-Friedrichs-like first-order scheme).

In the present paper, we take a different point of view.
In addition to the local minimum principle on the specific entropy,
the strategy that we propose consists of enforcing bounds that are naturally
satisfied by the low-order solution. More precisely,
let us set
\begin{align}
\label{rhominmaxdef}
\rho_i^{\text{min}}&:= \min_{j\in \calI(\Dom_i)} (\overline{\rho}_{ij}^{n+1},\rho_{j}^{n}) ,\qquad
\rho_i^{\text{max}} := \max_{j\in \calI(\Dom_i)} (\overline{\rho}_{ij}^{n+1},\rho_{j}^{n}), \\
E_i^{\text{min}}&:= \min_{j\in \calI(\Dom_i)} (\overline{E}_{ij}^{n+1},E_{j}^{n}),\qquad
E_i^{\text{max}} :=\max_{j\in \calI(\Dom_i)} (\overline{E}_{ij}^{n+1},E_{j}^{n}),\label{Eminmaxdef}\\
s^{\min}_i&:= \min_{j\in\calI(\Dom_i)}\Phi(\bsfU_j^n). \label{smindef}
\end{align}
We have already established in \S\ref{Sec:intermediate_limiting_states} that
$\rho_i^{\text{min}} \leq \rho_i^{L,n+1} \leq \rho_i^{\text{max}}$,
$E_i^{\text{min}} \leq E_j^{L,n+1} \leq E_i^{\text{max}}$ and
$s^{\min}_i\le \Phi(\bsfU_i^{L,n+1})$.  In the next section we are
going to modify the graph viscosity so that the resulting high-order update
$\bsfU_i^{n+1}$ satisfies
$\rho_i^{\text{min}} \leq \rho_i^{n+1} \leq \rho_i^{\text{max}}$ and
$s^{\min}_i\le \Phi(\bsfU_i^{n+1})$ (and possibly
$E_i^{\text{min}} \leq E_j^{n+1} \leq E_i^{\text{max}}$ if one wishes).

In general, the equation for the specific entropy may not be
explicitly available and therefore limiting the specific entropy may
not be possible. An alternative strategy consists of limiting the
internal energy $\rho e$. Using the Frechet derivative notation, a
straightforward computation shows that
$D^2(\rho e(\bu))((\varrho,\bp,a),(\varrho,\bp,a)) = -
\frac{1}{\rho}(\frac{\varrho}{\rho}\bbm - \bp)^2$
for all directions $(\varrho,\bp,a)\tr\in \Real^{d+2}$ and all points
$\bu=(\rho,\bbm,E)\tr\in \Real^{d+2}$, thereby showing that the internal
energy is concave with respect to the conservative variables
irrespective of the equation of state. Hence, the concavity of
$(\rho e)$ along with the convex combination
\eqref{def_dij_scheme_convex} implies that the low-order solution
$\bsfU^{L,n+1}_i$ satisfies the following discrete minimum principle
\begin{equation}\label{minimum_internal_energy}
  (\rho e)(\bsfU^{L,n+1}_i) \ge \varepsilon_i^{\min}:= \min\Big(\min_{j\in\calI(\Dom_i)}(\rho e)(\bsfU^{n}_j), 
  \min_{j\in\calI(\Dom_i)}(\rho e)(\overline{\bsfU}^{n+1}_{ij})\Big).
\end{equation}

In order to unify into one single framework all the bounds that we
want to enforce, we are going to rely on the notion of quasiconcavity,
which definition we now recall.
\begin{definition}[Quasiconcavity]
  Given a convex set $\calA\subset \Real^m$, we say that a function
  $\Psi:\calA \to \Real$ is quasiconcave if every {\em upper} level
  set of $\Psi$ is {\em convex}; that is, the set
  $L_\lambda(\Psi) := \{\bsfU\in\calA \st \Psi(\bsfU) \ge \lambda \}$
  is convex for any $\lambda\in \Real$ in the range of $\Psi$.
\end{definition}
Note in passing that concavity implies quasiconcavity.  We are going
to use the above definition in the following three settings:
\textup{(i)} $\calA=\Real^{d+2}$ and
$\Psi(\bsfU) =\rho - \rho_i^{\min}$ or
$\Psi(\bsfU) =\rho_i^{\max}-\rho$.  Note that in both cases the upper
level sets are half spaces (\ie these sets are obviously convex); 
\textup{(ii)} $\calA=\{\bsfU:=(\rho,\bbm,E) \st \rho>0\}$ and
$\Psi(\bsfU) = (\rho e) - \varepsilon_i^{\min}$. We have shown above that 
that $(\rho e)(\bsfU)$ is concave provided $\rho>0$ (the Hessian
of $\rho e$ is nonpositive), then it follows that
$\{\bsfU:=(\rho,\bbm,E) \st \rho>0, e>0\}$ is convex;
\textup{(iii)} $\calA=\{\bsfU:=(\rho,\bbm,E) \st \rho>0, e>0\}$,
$\Psi(\bsfU) = \Phi(\bsfU) - s_i^{\min}$. The quasiconvexity of
$\Psi:\calA \to \Real$ is proved in \cite[Thm.~8.2.2]{Serre_2000_II}.

\begin{remark}[Concavity vs. quasiconcavity]
Note that the two sets 
\[
\{(\rho,\bbm,E) \st \rho>0, e>0, s\ge r\},\qquad
\{(\rho,\bbm,E) \st \rho>0, \rho e>0, \rho (s -r)\ge 0\}
\]
are identical. In the first case, quasiconcavity is invoked to prove
that the upper level sets are convex, whereas in the second case one
just has to rely on concavity since the three functions $\rho$,
$\rho e(\bu)$, and $\rho (\Phi(\bu)-r)$ are concave. It is easier to
impose concave (or convex) constraints than quasiconcave ones. More
precisely: in practice it is simpler to apply Newton's method on a
concave function than on a quasiconcave function; in the first case
Newton's method is guaranteed to converge independently of the initial
guess, whereas it may not in the second case.
\end{remark}

\subsection{An abstract limiting scheme} 
Simple linear constraints such as
$ \overline{\rho}_i^{\text{min}} \leq \rho_i^{n+1} \leq
\overline{\rho}_i^{\text{max}}$
and
$\overline{E}_i^{\text{min}} \leq E_j^{n+1} \leq
\overline{E}_i^{\text{max}}$,
can be easily enforced by using the Flux Transport Corrected paradigm
of \cite{Zalesak_1979} (see also \cite{Boris_books_JCP_1973}).
However, to the best of our knowledge, the Zalezak's grouping
methodology cannot be (easily) extended to handle general convex
constraints like the minimum principle on the specific entropy without losing second-order
accuracy. We introduce in this section a methodology that does exactly
that.

We start as in the FCT methodology by estimating the difference $\bsfU_j^{H,n+1}-\bsfU_j^{L,n+1}$.
Subtracting \eqref{def_high_order_scheme_smooth} (or
\eqref{def_high_order_scheme_EV}) from \eqref{def_dij_scheme} we
obtain that the low-order and the provisional high-order solutions
satisfy the following identity
\[
\sum_{i\in \calI(\Dom_i)} \!\!m_{ij}(\bsfU_j^{H,n+1}-\bsfU_j^{n}) 
-\dt d_{ij}^{H,n} (\bsfU_j^{n}-\bsfU_i^{n}) = m_{i}(\bsfU_i^{L,n+1}-\bsfU_i^{n}) 
-\dt d_{ij}^{L,n} (\bsfU_j^{n}-\bsfU_i^{n}).
\]
This equality is rewritten in the following form better suited for
post-processing:
\[
m_i (\bsfU_j^{H,n+1}-\bsfU_j^{L,n+1}) 
= \sum_{i\in \calI(\Dom_i)}\Delta_{ij}(\bsfU_j^{H,n+1}-\bsfU_j^{n})
+ \dt (d_{ij}^{H,n} -d_{ij}^{L,n}) (\bsfU_j^{n}-\bsfU_i^{n}),
\]
where we have set $\Delta_{ij}:= m_i\delta_{ij}-m_{ij}$.
The above identity can be rewritten as follows:
\begin{equation}
\label{RegFCT}
\left\{\begin{aligned}
&m_i (\bsfU^{H,n+1}-\bsfU^{L,n+1})= \sum_{j\in\calI(\Dom_i)} \bA_{ij}^n \\
&\bA_{ij}^n:=\Delta_{ij}\big(\bsfU^{H,n+1}_{j}-\bsfU^{n}_j - (\bsfU^{H,n+1}_{i}-\bsfU^{n}_i)\big)
-\dt (d^{H,n}_{ij}-d^{L,n}_{ij})(\bsfU^{n}_j-\bsfU^{n}_i),
\end{aligned}\right.
\end{equation}
where we used $\sum_{j\in\calI(\Dom_i)} \Delta_{ij} =0$.
Observe that the matrix $\bA^n$ is skew-symmetric; the immediate
consequence is that
$\sum_{i\in \calI}m_i \bsfU_j^{H,n+1} = \sum_{i\in \calI}m_i
\bsfU_j^{L,n+1}$,
\ie the total mass of the provisional high-order solution is the same
at that of the low-order solution.  

The next step consists of introducing symmetric limiting parameters
$\limiter_{ij}= \limiter_{ji}\in [0,1]$ and estimating $\limiter_{ij}$ so
that the new quantity
$\bsfU^{n+1} = \bsfU^{L,n+1} +\frac{1}{m_i} \sum_{j\in\calI(\Dom_i)}
\limiter_{ij}\bA_{ij}^n$
satisfies the expected bounds.  Note again that the skew-symmetry of
$\bA^n$ together with the symmetry of the limiter implies that
$\sum_{i\in \calI}m_i \bsfU_j^{n+1} = \sum_{i\in \calI}m_i
\bsfU_j^{L,n+1}$
for any choice of limiter $\limiter_{ij}$, \ie the limiting process is
conservative.  Using the notation introduced at the end of
\S\ref{Sec:bounds}, we seek $\limiter_{ij}$ so that
$\Psi(\bsfU^{n+1})\ge 0$.
%We have not been able to modify the FCT
%algorithm to achieve this task without destroying the
%high-order accuracy of $\bsfU^{H,n+1}$.

We now depart form the FCT algorithm as described in \citep{Zalesak_1979} by introducing
$\lambda_j := \frac{1}{\text{card}(\calI(\Dom_i))-1}$, $j\in\calI(\Dom_i)\setminus\{i\}$, and rewriting \eqref{RegFCT} as follows
\begin{align}
\label{convFCT}
\bsfU_i^{n+1} = \sum_{j \in \calI(\Dom_i)\backslash\{i\} } \lambda_j (\bsfU_i^{L,n+1} +\limiter_{ij}\bsfP_{ij}),
\qquad \text{with} \qquad \bsfP_{ij} := \frac{1}{m_i \lambda_j} \bsfA_{ij}^n.
\end{align}
Note that $\bsfU_i^{n+1}=\bsfU_i^{L,n+1}$ if $\limiter_{ij}=0$ and
$\bsfU_i^{n+1}=\bsfU_i^{H,n+1}$ if $\limiter_{ij}=1$.  The following
lemma  is the driving force of
the limiting technique that we propose.
\begin{lemma} \label{Lem:quasiconcave} Let
  $\Psi(\bu):\calA \rightarrow \Real$ be a quasiconcave
  function. Assume that the limiting parameters
  $\limiter_{ij} \in [0,1]$ are such that be such that
  $\Psi (\bsfU_i^{L,n+1} + \limiter_{ij} \bsfP_{ij}) \geq 0$,
  $i\ne j \in \calI(\Dom_i)$, then the following inequality holds
  true:
\begin{align*}
\Psi \bigg(\sum_{j \in \calI(\Dom_i)\setminus\{i\}} 
\lambda_j (\bsfU_i^{L,n+1} + \limiter_{ij} \bsfP_{ij}) \bigg) \geq 0 .
\end{align*}
\end{lemma}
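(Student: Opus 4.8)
The plan is to read the conclusion as a one-line consequence of the definition of quasiconcavity, applied to the upper level set of $\Psi$ through a suitably chosen threshold. First I would record that the weights $\lambda_j := \frac{1}{\text{card}(\calI(\Dom_i))-1}$, with $j$ ranging over $\calI(\Dom_i)\setminus\{i\}$, are nonnegative and sum to one, so that $\sum_{j\in\calI(\Dom_i)\setminus\{i\}}\lambda_j\,\bsfV_j$ with $\bsfV_j := \bsfU_i^{L,n+1}+\limiter_{ij}\bsfP_{ij}$ is a genuine convex combination of the points $\bsfV_j$; since every $\bsfV_j$ lies in $\calA$ (this is implicit in the hypothesis $\Psi(\bsfV_j)\ge 0$, as $\Psi$ is a function on $\calA$) and $\calA$ is convex, the combination again lies in $\calA$, so that $\Psi$ is well defined on it. This is the only place the convexity of $\calA$ enters.

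Next I would set $\lambda^\star := \min_{j\in\calI(\Dom_i)\setminus\{i\}}\Psi(\bsfV_j)$. By the hypothesis $\Psi(\bsfV_j)\ge 0$ for every admissible $j$, hence $\lambda^\star\ge 0$; moreover $\lambda^\star$ is attained at some index, and therefore belongs to the range of $\Psi$, which is exactly what is required for the definition of quasiconcavity to guarantee that the upper level set $L_{\lambda^\star}(\Psi) := \{\bu\in\calA \st \Psi(\bu)\ge\lambda^\star\}$ is convex. Each $\bsfV_j$ satisfies $\Psi(\bsfV_j)\ge\lambda^\star$ by definition of $\lambda^\star$, i.e. $\bsfV_j\in L_{\lambda^\star}(\Psi)$ for all $j\in\calI(\Dom_i)\setminus\{i\}$.

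Finally, since $L_{\lambda^\star}(\Psi)$ is convex and contains all the $\bsfV_j$, it contains their convex combination $\sum_{j}\lambda_j\,\bsfV_j$; unwinding the definition of $L_{\lambda^\star}(\Psi)$ yields $\Psi\big(\sum_{j}\lambda_j\,\bsfV_j\big)\ge\lambda^\star\ge 0$, which is precisely the claimed inequality. I do not anticipate a genuine obstacle here: the argument is a direct invocation of the definitions. The only two points deserving a word of care are that the fan weighting over the neighbors $j$ of $i$ is a bona fide convex combination (so that the result applies to the full sum, not merely to two states), and that one should threshold at $\lambda^\star=\min_j\Psi(\bsfV_j)$ rather than at the value $0$, since the definition of quasiconcavity in this paper only asserts convexity of $L_\lambda(\Psi)$ for $\lambda$ in the range of $\Psi$, and $\lambda^\star$, being attained, is guaranteed to lie in that range.
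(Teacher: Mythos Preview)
Your proposal is correct and follows essentially the same approach as the paper: both arguments place all the limited states in a convex upper level set of $\Psi$ and conclude by taking their convex combination. The only difference is that the paper thresholds directly at $0$ (using $L_0$), whereas you threshold at $\lambda^\star=\min_j\Psi(\bsfV_j)\ge 0$ to ensure the threshold lies in the range of $\Psi$; this is a minor technical refinement rather than a different route.
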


\begin{proof} 
  Let $L_0=\{\bsfU \in\calA \st \psi(\bsfU)\ge 0 \}$. By
  definition all the limited states
  $\bsfU_i^{L,n+1} + \limiter_{ij} \bsfP_{ij}$ are in $L_0$ for all $i\ne j \in \calI(\Dom_i)$.  Since
  $\Psi$ is quasiconcave, the upper level set $L_0$ is convex. As a result, the
  convex combination
  $\sum_{j \in \calI(\Dom_i)\setminus\{i\}} \lambda_j (\bsfU_i^{L,n+1}
  + \limiter_{ij} \bsfP_{ij})$
  is in $L_0$, \ie
  $\Psi \big(\sum_{j \in \calI(\Dom_i)\setminus\{i\}} \lambda_j
  (\bsfU_i^{L,n+1} + \limiter_{ij} \bsfP_{ij}) \big) \geq 0$,
  which concludes the proof.
\end{proof}

\begin{lemma} \label{Lem:compute_lij} Let $\limiter^i_j$ be defined by
\begin{equation}\label{Eq:Lem:compute_lij}
\limiter^i_j=
\begin{cases}
1 & \text{if $\Psi(\bsfU_i^{L,n+1} + \bsfP_{ij})\ge 0$} \\
\max\{\limiter \in [0,1] \st \Psi(\bsfU_i^{L,n+1} +
  \limiter \bsfP_{ij})\ge 0\} & \text{otherwise},
\end{cases}
\end{equation}
for every $i\in\calI$ and $j\in \calI(\Dom_i)$. The following two statements hold true:
\textup{(i)} $\Psi(\bsfU_i^{L,n+1} + \limiter \bsfP_{ij})\ge 0$ for every $\limiter \in [0,\limiter^i_j]$;
\textup{(ii)} In particular, setting
  $\limiter_{ij} = \min(\limiter^i_j, \limiter^j_i)$, we have
  $\Psi(\bsfU_i^{L,n+1} + \ell_{ij}\bsfP_{ij})\ge 0$ and
  $\limiter_{ij}=\limiter_{ji}$ for every $i\in\calI$ and
  $j\in \calI(\Dom_i)$.
\end{lemma}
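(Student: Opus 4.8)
The statement is essentially a convexity/continuity bookkeeping exercise about the scalar function $g(\limiter):=\Psi(\bsfU_i^{L,n+1}+\limiter\bsfP_{ij})$ on the interval $[0,1]$. The plan is to first establish (i), and then deduce (ii) immediately from (i) together with the symmetry $\bsfP_{ij}$ versus $\bsfP_{ji}$. The only structural fact I need about $\Psi$ is quasiconcavity on $\calA$, plus the implicit assumption (true in all three settings of \S\ref{Sec:bounds}) that the low-order state $\bsfU_i^{L,n+1}$ lies in $\calA$ and in fact satisfies $\Psi(\bsfU_i^{L,n+1})\ge 0$; for the entropy and internal-energy cases this is exactly what Theorem~\ref{Thm:UL_is_invariant} and the discrete minimum principle \eqref{minimum_internal_energy} give us.

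For part (i): if $\Psi(\bsfU_i^{L,n+1}+\bsfP_{ij})\ge 0$ then $\limiter^i_j=1$ and I must show $g(\limiter)\ge 0$ for all $\limiter\in[0,1]$. This follows because both endpoints $g(0)=\Psi(\bsfU_i^{L,n+1})\ge 0$ and $g(1)\ge 0$ put $\bsfU_i^{L,n+1}$ and $\bsfU_i^{L,n+1}+\bsfP_{ij}$ in the upper level set $L_0=\{\bsfU\in\calA\st\Psi(\bsfU)\ge 0\}$, which is convex by quasiconcavity, so the whole segment joining them—namely $\{\bsfU_i^{L,n+1}+\limiter\bsfP_{ij}\st\limiter\in[0,1]\}$—lies in $L_0$. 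In the "otherwise" branch, $\limiter^i_j$ is defined as $\max\{\limiter\in[0,1]\st g(\limiter)\ge 0\}$; I should note this set is nonempty (it contains $0$) and closed (since $\Psi$ is continuous on $\calA$, or at least upper semicontinuous, and the segment stays in $\calA$), so the max is attained and $g(\limiter^i_j)\ge 0$. Then for any $\limiter\in[0,\limiter^i_j]$ the point $\bsfU_i^{L,n+1}+\limiter\bsfP_{ij}$ lies on the segment between $\bsfU_i^{L,n+1}$ (where $\Psi\ge 0$) and $\bsfU_i^{L,n+1}+\limiter^i_j\bsfP_{ij}$ (where $\Psi\ge 0$), hence again in the convex set $L_0$ by quasiconcavity. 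This proves (i).

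For part (ii): the key algebraic observation is that $\bsfA^n$ is skew-symmetric, $\bsfA_{ji}^n=-\bsfA_{ij}^n$, and the definition $\bsfP_{ij}=\frac{1}{m_i\lambda_j}\bsfA_{ij}^n$ shows $\bsfP_{ij}$ and $\bsfP_{ji}$ differ only by a positive scalar factor $\frac{m_i\lambda_j}{m_j\lambda_i}$ depending on $i,j$; but since $\limiter^i_j$ is defined relative to the direction $\bsfP_{ij}$ for node $i$ and $\limiter^j_i$ relative to $\bsfP_{ji}$ for node $j$, the two limiters are computed from genuinely different constraints and are not a priori equal—which is precisely why one symmetrizes by taking the minimum. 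Setting $\limiter_{ij}=\min(\limiter^i_j,\limiter^j_i)$ is manifestly symmetric in $i,j$, and since $\limiter_{ij}\le\limiter^i_j$, part (i) applied at node $i$ gives $\Psi(\bsfU_i^{L,n+1}+\limiter_{ij}\bsfP_{ij})\ge 0$. This completes the proof.

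The main (modest) obstacle is purely a matter of rigor: justifying that the maximum in \eqref{Eq:Lem:compute_lij} is actually attained, which requires that $\Psi$ be at least upper semicontinuous along the segment and that the segment not exit $\calA$. In the internal-energy case one should check that $\bsfU_i^{L,n+1}$ has positive density with room to spare, and in the entropy case that positivity of density and internal energy is maintained along the relevant segment; in practice the convex-limiting construction guarantees this, and it can be taken as part of the standing hypothesis that $\Psi$ is evaluated only on $\calA$. Everything else is an immediate consequence of the convexity of upper level sets of a quasiconcave function, exactly as in Lemma~\ref{Lem:quasiconcave}.
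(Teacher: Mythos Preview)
Your proposal is correct and follows essentially the same route as the paper's own proof: both split part~\textup{(i)} into the two cases of \eqref{Eq:Lem:compute_lij} and in each case argue that the two endpoints lie in the convex upper level set $L_0(\Psi)$, hence so does the entire segment; part~\textup{(ii)} then follows in both proofs from $\limiter_{ij}\le\limiter^i_j$ together with the manifest symmetry of $\min(\limiter^i_j,\limiter^j_i)$. Your additional remarks on attainment of the maximum and on the relation between $\bsfP_{ij}$ and $\bsfP_{ji}$ are correct but not required for the argument (the paper's proof omits them).
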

\begin{proof} \textup{(i)}
  First, if $\Psi(\bsfU_i^{L,n+1} + \bsfP_{ij})\ge 0$ we observe that
  $\Psi(\bsfU_i^{L,n+1} + \limiter \bsfP_{ij})\ge 0$ for any
  $\limiter\in [0,1]$ because $\bsfU_i^{L,n+1}\in L_0(\Psi)$,
  $\bsfU_i^{L,n+1} + \bsfP_{ij}\in L_0(\Psi)$ and $L_0(\Psi)$ is
  convex. Second, if $\Psi(\bsfU_i^{L,n+1} + \bsfP_{ij})< 0$, we
  observe that $\limiter^i_j$ is uniquely defined and for any
  $\limiter\in [0,\limiter^i_j]$ we have
  $\Psi(\bsfU_i^{L,n+1} + \limiter \bsfP_{ij})\ge 0$ because
  $\bsfU_i^{L,n+1}\in L_0(\Psi)$,
  $\bsfU_i^{L,n+1} + \limiter^i_j \bsfP_{ij}\in L_0(\Psi)$ and
  $L_0(\Psi)$ is convex. \textup{(ii)}  Since
  $\limiter_{ij} = \min(\limiter^i_j, \limiter^j_i)\le \limiter^i_j$,
  the above construction implies that
  $\Psi(\bsfU_i^{L,n+1} + \ell_{ij}\bsfP_{ij})\ge 0$. Note finally
  that
  $\limiter_{ij} = \min(\limiter^i_j, \limiter^j_i) = \limiter_{ji}$.
\end{proof}

\begin{remark}[Extension to general hyperbolic systems]
  Notice that Lemma~\ref{Lem:quasiconcave} and
  Lemma~\ref{Lem:compute_lij} are not specific to the Euler equations.
  These results can be used to limit solutions of arbitrary hyperbolic
  systems where the invariant domain is described by
  quasiconcave constraints.
\end{remark}

\subsection{Application to the Euler equations}
We explain in this section how to use Lemma~\ref{Lem:quasiconcave} and
Lemma~\ref{Lem:compute_lij} to enforce the quasiconcave constraints
described in \S\ref{Sec:bounds}. 
The algorithm goes as follows:

(i) Given the state $\bsfU^n$, which we assume to be admissible, we
compute $\bsfU^{L,n+1}$ and $\bsfU^{H,n+1}$ as explained in
\S\ref{Sec:low_order} and either
\S\ref{Sec:smoothness_based_viscosity} or \ref{Sec:entropy_residual};

(ii) The density is limited by invoking Lemma~\ref{Lem:quasiconcave}
and Lemma~\ref{Lem:compute_lij} and the bounds described in
\S\ref{Sec:bounds} to enforce the quasiconcave constraints
$\Psi(\bsfU) = \rho-\rho_i^{\min}\ge 0$ and
$\Psi(\bsfU) = \rho_i^{\max}-\rho\ge 0$. The resulting limiter is
denoted by $\limiter_{ij}^\rho$ and details of the computation of
$\limiter_{ij}^\rho$ are given in \S\ref{Sec:density};

(iii) The internal energy
$\rho e:= E - \frac{\bbm^2}{2\rho}$ is limited by invoking
Lemma~\ref{Lem:quasiconcave} and Lemma~\ref{Lem:compute_lij} to
enforce the quasiconcave constraint
$\Psi(\bsfU) =E - \frac{\bbm^2}{2\rho} - \varepsilon_i^{\min} \ge 0$.
The corresponding limiter is denoted
$\limiter_{ij}^e\le \limiter_{ij}^\rho$ and details of the computation
of $\limiter_{ij}^e$ are given in \S\ref{Sec:rho_e};

(iv) The minimum principle on the specific entropy is enforced by
using $\Psi(\bsfU) = \Phi(\bsfU) - s_i^{\min}$. The details on the computation
of the corresponding  limiter $\limiter_{ij}^s\le \limiter_{ij}^e$ are 
given in \S\ref{Sec:specific_entropy}.

(v) Finally, upon setting $\limiter_{ij}:= \limiter_{ij}^s$, the update
$\bsfU^{n+1}$ is computed by setting
$\bsfU^{n+1}=\bsfU^{L,n+1}+ \frac{1}{m_i}\sum_{j\in\calI(\Dom_i)}
\limiter_{ij} \bA_{ij}^n$.
This type of limitation can be iterated a few times by observing that
\[
\bsfU^{H,n+1}=\bsfU^{L,n+1}+ \frac{1}{m_i}\sum_{j\in\calI(\Dom_i)} \limiter_{ij} \bA_{ij}^n
+ \frac{1}{m_i}\sum_{j\in\calI(\Dom_i)}(1- \limiter_{ij}) \bA_{ij}^n.
\]
Then setting $\bsfU^{(0)}:=\bsfU^{L,n+1}$ and
$ \bA_{ij}^{(0)}:= \bA_{ij}^n$ the iterative algorithm proceeds as
shown in Algorithm~\ref{iterative_algorithm}. In the numerical
simulations reported at the end of the paper we have taken
$ k_{\max}=1$.
\begin{algorithm}[H]
\renewcommand{\algorithmicrequire}{\textbf{Input:}}
\renewcommand{\algorithmicensure}{\textbf{Output:}}
\caption{Iterative limiting}
\label{iterative_algorithm}
\begin{algorithmic}[1]
\Require $\bsfU^{L,n+1}$, $\bA^n$, and $k_{\max}$
\Ensure $\bsfU^{n+1}$
\State Set $\bsfU^{(0)}:=\bsfU^{L,n+1}$ and $ \bA^{(0)}:= \bA^n$
\For{$k=0 \textbf{ to }  k_{\max}-1$}
\State Compute limiter matrix $\limiter^{(k)}$
\State Update $\bsfU^{(k+1)}=\bsfU^{(k)}+ \frac{1}{m_i}\sum_{j\in\calI(\Dom_i)} \limiter^{(k)}_{ij} \bA_{ij}^{(k)}$
\State Update $\bA_{ij}^{(k+1)} = (1-\limiter_{ij}^{(k)})\bA_{ij}^{(k)}$
\EndFor
\State   $\bsfU^{n+1}= \bsfU^{(k_{\max})}$
\end{algorithmic}
\end{algorithm}

\begin{remark}[Other quantities]
  As observed in Section~\ref{Sec:bounds} it is also possible to
  impose additional limiting based on quasiconcave constraints. For
  example, one could limit the total energy from below and from
  above. Numerical experiments reveal that this extra limitation does
  not improve the performance of the scheme. All the tests reported in
  \S\ref{Sec:Numerical_illustrations} are done by limiting the
  density, the internal energy and the specific entropy as described
  above. We have found that limiting the internal energy delivers
  second-order accuracy in the maximum norm, but has a tendency to
  over-dissipate contact discontinuities. Note that limiting the
  specific entropy amounts in effect to limit the internal energy.
\end{remark}

\begin{remark}[Equation of state]
  So far, everything we have described is independent of the equation of
  state.
\end{remark}

\subsection{Limitation on the density} \label{Sec:density} The limitation on the density
as specified by \eqref{Eq:Lem:compute_lij}
proceeds as follows. To avoid divisions by zero, we introduce the
small parameter $\epsilon:= 10^{-14}$ set
$\epsilon_i=\epsilon\rho_i^{\max}$ for all $i\in\calI$. Let us denote by $P_{ij}^{\rho}$ the
$\rho$-component of $\bP_{ij}$, and let us set
\begin{equation}
\limiter^{i,\rho}_j=
\begin{cases}
  \min(\frac{|\rho_i^{\min}-\rho_i^{L,n+1}|}{|P_{ij}^{\rho}|+\epsilon_i},1)
  & \text{if $\rho_i^{L,n+1}+P_{ij}^{\rho} < \rho_i^{\min}$} \\
  1 & \text{if $ \rho_i^{\min}\le \rho_i^{L,n+1}+P_{ij}^{\rho} \le \rho_i^{\max}$}\\
 \min(\frac{|\rho_i^{\max}-\rho_i^{L,n+1}|}{|P_{ij}^{\rho}|+\epsilon_i},1)
  & \text{if $\rho_i^{\max}<\rho_i^{L,n+1}+P_{ij}^{\rho}$.}
\end{cases} \label{def_of_lrho}
\end{equation}
Upon setting $\Psi_+(\bsfU) = \rho - \rho_i^{\min}$, and
$\Psi_-(\bsfU) =\rho_i^{\max} - \rho$, we have the following result
whose proof is left to the reader.
\begin{lemma}
  The definition \eqref{def_of_lrho} implies that
  $\Psi_\pm(\bsfU_i^{L,n+1} + \limiter \bsfP_{ij} )\ge 0$ for all
  $\ell\in [0, \limiter_j^{i,\rho}]$.
\end{lemma}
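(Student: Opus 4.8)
The plan is to verify directly that the piecewise definition in \eqref{def_of_lrho} forces the limited state $\bsfU_i^{L,n+1} + \limiter \bsfP_{ij}$ to have its $\rho$-component between $\rho_i^{\min}$ and $\rho_i^{\max}$ for every $\limiter \in [0,\limiter_j^{i,\rho}]$, which is exactly the assertion $\Psi_\pm \ge 0$. The key observation is that the $\rho$-component of $\bsfU_i^{L,n+1} + \limiter \bsfP_{ij}$ is the scalar affine function $\limiter \mapsto \rho_i^{L,n+1} + \limiter P_{ij}^{\rho}$, so the whole statement reduces to a one-dimensional interval argument. Since we already know from \S\ref{Sec:intermediate_limiting_states} that $\rho_i^{\min} \le \rho_i^{L,n+1} \le \rho_i^{\max}$, the value at $\limiter=0$ is admissible, and monotonicity in $\limiter$ does the rest.

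First I would treat the middle case of \eqref{def_of_lrho}: if $\rho_i^{\min}\le \rho_i^{L,n+1}+P_{ij}^{\rho}\le \rho_i^{\max}$, then both endpoints $\limiter=0$ and $\limiter=1$ of the affine map lie in $[\rho_i^{\min},\rho_i^{\max}]$, hence so does every convex combination, i.e. every $\limiter\in[0,1]=[0,\limiter_j^{i,\rho}]$. Next I would treat the first case, $\rho_i^{L,n+1}+P_{ij}^{\rho}<\rho_i^{\min}$; here necessarily $P_{ij}^{\rho}<0$ (because $\rho_i^{L,n+1}\ge\rho_i^{\min}$), so the affine map is decreasing, the upper bound $\rho_i^{L,n+1}+\limiter P_{ij}^{\rho}\le \rho_i^{L,n+1}\le\rho_i^{\max}$ holds automatically, and the lower bound $\rho_i^{L,n+1}+\limiter P_{ij}^{\rho}\ge\rho_i^{\min}$ holds precisely when $\limiter \le (\rho_i^{L,n+1}-\rho_i^{\min})/|P_{ij}^{\rho}| = |\rho_i^{\min}-\rho_i^{L,n+1}|/|P_{ij}^{\rho}|$; since $\limiter_j^{i,\rho}=\min\bigl(|\rho_i^{\min}-\rho_i^{L,n+1}|/(|P_{ij}^{\rho}|+\epsilon_i),1\bigr)$ is no larger than this threshold, we are done. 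The third case, $\rho_i^{\max}<\rho_i^{L,n+1}+P_{ij}^{\rho}$, is symmetric: now $P_{ij}^{\rho}>0$, the lower bound is automatic, and the upper bound forces $\limiter\le|\rho_i^{\max}-\rho_i^{L,n+1}|/|P_{ij}^{\rho}|$, which again dominates $\limiter_j^{i,\rho}$.

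There is essentially no main obstacle here — the statement is exactly the scalar interval arithmetic underlying Lemma~\ref{Lem:compute_lij}(i) specialized to affine $\Psi$, and the only mild subtlety is bookkeeping the sign of $P_{ij}^{\rho}$ in each branch and checking that the $\epsilon_i$ regularization in the denominator only \emph{shrinks} $\limiter_j^{i,\rho}$ (since $|P_{ij}^{\rho}|+\epsilon_i \ge |P_{ij}^{\rho}|$), so it never violates the bound and only makes the division safe when $P_{ij}^{\rho}=0$ (in which case the affine map is constant and equal to the admissible value $\rho_i^{L,n+1}$). This is why the authors leave it to the reader; I would present it as the three-case computation above in two or three lines.
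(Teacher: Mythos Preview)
Your proposal is correct and is exactly the straightforward case-by-case verification the paper has in mind; the authors explicitly leave this proof to the reader, and your three-branch affine argument (with the observation that the $\epsilon_i$ in the denominator only shrinks $\limiter_j^{i,\rho}$) is the natural way to fill it in.
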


\begin{remark}[Covolume EOS]
  In the case of the covolume equation of state,
  $p(1-b \rho) = (\gamma-1)\rho e$, it is known that
  $\calA=\{(\rho,\bbm,E) \st \rho>0, e>0, s\ge s^{\min}, b\rho<1\}$ is an
  invariant domain, see
  \cite[Prop.~A.1]{Guermond_Popov_Fast_Riemann_2016}. The above method
  can be used to enforce the additional affine constraint $1-b\rho >0$.
\end{remark}

\begin{remark}[Total energy]
  The limitation on the total energy can be done exactly as for the
  density. Let us emphasize though that we have not found this operation to be 
useful and it is not done in the numerical tests reported at the end of the paper.
\end{remark}

\subsection{Limitation on the internal energy $\rho e$} \label{Sec:rho_e}
In this section we explain how to compute the limiter to enforce the
local minimum principle on the internal energy $\rho e$ as stated
in \eqref{minimum_internal_energy}.

Upon setting $\Psi(\bsfU) := (\rho e)(\bsfU) - \varepsilon_i^{\min}$
with $\bsfU:=(\rho,\bbm,E)$, and by virtue of
Lemma~\ref{Lem:quasiconcave} and Lemma~\ref{Lem:compute_lij}, we have
to estimate $\limiter_j^{i,e}\in [0,\limiter_j^{i,\rho}]$ so that
$\Psi(\bsfU_i^{L,n+1} + \limiter \bsfP_{ij}) \ge 0$ for all
$\limiter\in [0, \limiter_j^{i,e}]$. We define the auxiliary function $\psi:\{\bsfU\st \rho>0\}\to \Real$
\[
\psi(\bsfU) :=(\rho^2 e)(\bsfU) - \varepsilon_i^{\min}\rho = \rho E - \frac12
\bbm^2 - \varepsilon_i^{\min}\rho.
\]
Then the above problem is equivalent seeking
$\limiter_j^{i,e}\in [0,\limiter_j^{i,\rho}]$ so that
$\psi(\bsfU_i^{L,n+1} + \limiter \bsfP_{ij})\ge 0$ for all
$\limiter\in [0, \limiter_j^{i,e}]$. The key observation is that
now $\psi$ is a quadratic functional with 
\[
D\psi(\bsfU) = \begin{pmatrix}E- \varepsilon_i^{\min}\\ -\bbm\\ \rho\end{pmatrix},\qquad
D^2\psi(\bsfU) =\begin{pmatrix}0 & \bzero\tr & 1 \\ \bzero & -\polI & \bzero \\ 1 & \bzero\tr & 0\end{pmatrix}.
\]
Then upon setting $a:=\frac12  \bsfP_{ij}\tr D^2\psi \bsfP_{ij}$, $b:= D\psi(\bsfU_i^{L,n+1})\SCAL \bsfP_{ij}$ and $c:=\psi(\bsfU_i^{L,n+1})$, we have
\[
\psi(\bsfU_i^{L,n+1} + t \bsfP_{ij}) = a t^2 + b t + c.
\]
Let $t_0$ be the smallest positive root of the equation $a t^2 + b t + c=0$,
with the convention that $t_0=1$ if the equation has no positive root.
Then we choose $\limiter_j^{i,e}$ to be such that
\begin{equation}
\limiter_j^{i,e} = \min(t_0,\limiter_j^{i,\rho}). \label{def_of_le}
\end{equation}

\begin{lemma}
The definition \eqref{def_of_le} implies that $\Psi(\bsfU_i^{L,n+1} + \limiter \bsfP_{ij} )\ge 0$ for all
$\limiter\in [0, \limiter_j^{i,e}]$.
\end{lemma}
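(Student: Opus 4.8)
The plan is to reduce the claim about $\Psi$ to the claim about the auxiliary quadratic $\psi$, and then to a routine property of single-variable quadratics. First I would observe that for $\bsfU = (\rho,\bbm,E)$ with $\rho > 0$ we have the pointwise identity $\psi(\bsfU) = \rho\, \Psi(\bsfU)$, since $\psi(\bsfU) = \rho E - \tfrac12 \bbm^2 - \varepsilon_i^{\min}\rho = \rho\big((\rho e)(\bsfU) - \varepsilon_i^{\min}\big)$. Hence, on the half-line $\limiter \mapsto \bsfU_i^{L,n+1} + \limiter\bsfP_{ij}$, as long as the density component stays positive, the sign of $\Psi$ agrees with the sign of $\psi$. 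The positivity of the density along this segment is exactly what the density limiter guarantees: by the preceding lemma, $\rho_i^{\min} \le \rho(\bsfU_i^{L,n+1} + \limiter\bsfP_{ij})$ for all $\limiter \in [0,\limiter_j^{i,\rho}]$, and $\rho_i^{\min} > 0$ because $\bsfU_i^{L,n+1}$ has positive density (Theorem~\ref{Thm:UL_is_invariant}) and the $\overline{\bsfU}_{ij}^{n+1}$ also have positive density. Since $\limiter_j^{i,e} \le \limiter_j^{i,\rho}$, the whole segment $[0,\limiter_j^{i,e}]$ lies in the region $\{\rho > 0\}$ where the identity $\psi = \rho\Psi$ is valid.

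Next I would analyze the quadratic $q(\limiter) := \psi(\bsfU_i^{L,n+1} + \limiter\bsfP_{ij}) = a\limiter^2 + b\limiter + c$. Here $c = \psi(\bsfU_i^{L,n+1}) = \rho_i^{L,n+1}\big((\rho e)(\bsfU_i^{L,n+1}) - \varepsilon_i^{\min}\big) \ge 0$, using $\rho_i^{L,n+1} > 0$ together with the discrete minimum principle \eqref{minimum_internal_energy}. Thus $q(0) = c \ge 0$. Let $t_0$ be the smallest positive root of $q$, with $t_0 = 1$ if no positive root exists. I claim $q(\limiter) \ge 0$ for all $\limiter \in [0,t_0]$: this is the elementary fact that a continuous function which is nonnegative at $0$ and has its first positive zero at $t_0$ cannot become negative on $[0,t_0)$ (by the intermediate value theorem, a sign change would force a root strictly between $0$ and $t_0$), and it vanishes at $t_0$; in the case $t_0 = 1$ there is no positive root on $(0,1]$ so $q$ keeps the sign of $q(0) \ge 0$ throughout. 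Since $\limiter_j^{i,e} = \min(t_0,\limiter_j^{i,\rho}) \le t_0$, we conclude $q(\limiter) \ge 0$ for all $\limiter \in [0,\limiter_j^{i,e}]$.

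Finally I would stitch the two observations together: for $\limiter \in [0,\limiter_j^{i,e}] \subseteq [0,\limiter_j^{i,\rho}]$, the state $\bsfU_i^{L,n+1} + \limiter\bsfP_{ij}$ has positive density, so $\Psi(\bsfU_i^{L,n+1} + \limiter\bsfP_{ij}) = \rho^{-1} q(\limiter) \ge 0$, which is exactly the assertion of the lemma. The only genuinely delicate point is the bookkeeping around the degenerate cases of the quadratic — $a = 0$ (so $q$ is affine or constant), $q$ having a double root, or $q$ having no positive root at all — but the convention $t_0 = 1$ when there is no positive root, combined with $c \ge 0$, handles all of these uniformly; one just has to check that ``smallest positive root'' is well defined (the root set is closed and bounded below by $0$) whenever it is invoked. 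No convexity or concavity of $\psi$ is needed here — that was used only to justify $c \ge 0$ via the convex-combination structure \eqref{def_dij_scheme_convex} underlying \eqref{minimum_internal_energy} — so the argument is purely the one-dimensional quadratic fact plus the density positivity already in hand.
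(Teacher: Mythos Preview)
Your proof is correct and follows essentially the same route as the paper: reduce to the quadratic $q(t)=at^2+bt+c$ associated with $\psi$, use $c=\psi(\bsfU_i^{L,n+1})\ge 0$ from \eqref{minimum_internal_energy}, and invoke the intermediate value theorem to conclude $q\ge 0$ on $[0,t_0]$. You are in fact more explicit than the paper about one point it leaves implicit, namely that passing from $\psi\ge 0$ back to $\Psi\ge 0$ requires the density to stay positive along the segment, which you correctly secure via $\limiter_j^{i,e}\le\limiter_j^{i,\rho}$ and the density limiter lemma (one small slip: $\rho_i^{\min}>0$ follows from the positivity of $\rho_j^n$ and $\overline{\rho}_{ij}^{n+1}$, not from that of $\bsfU_i^{L,n+1}$).
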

\begin{proof}
  If there is no positive root to the equation $a t^2 + b t + c =0$ and
  since we have established that $c= \psi(\bsfU_i^{L,n+1})\ge 0$ (see
  \eqref{minimum_internal_energy}), we have $a t^2 + b t + c\ge 0$ for
  all $t\ge 0$; that is,
  $\Psi(\bsfU_i^{L,n+1} + \limiter \bsfP_{ij} )\ge 0$ for all
  $\ell\ge 0$, and in particular this is true for all
  $\limiter\in [0, \limiter_j^{i,e}]$.  Otherwise, if there is at
  least one positive root to the equation $a t^2 + b t + c=0$, then
  denoting by $t_0$ the smallest positive root, we have
  $a t^2 + b t + c\ge 0$ for all $t\in[0, t_0]$ (if not, there would
  exist $t_1\in (0,t_0)$ s.t.  $a t_1^2 + b t_1 + c< 0$ and the
  intermediate value theorem would imply the existence a root
  $t^*\in(0,t_1)$ which contradict that $t_0$ is the smallest positive
  root).  This implies that
  $\Psi(\bsfU_i^{L,n+1} + \limiter \bsfP_{ij} )\ge 0$ for all
  $\limiter\in[0,t_0]$, and  in particular this is true for all
  $\limiter\in [0, \limiter_j^{i,e}]$ owing to \eqref{def_of_le}.
\end{proof}

\begin{remark}[Equation of state]
  Observe that the proposed limitation on $\rho e$ is independent of
  the equation of state.  
\end{remark}

\subsection{Minimum principle on the specific
  entropy} \label{Sec:specific_entropy} We now describe how to compute
the limiter to enforce the local minimum principle on the specific
entropy. \cite{Khobalatte_Perthame_1994} is the first paper we are
aware of where this type of limiting is done.

By virtue of Lemma~\ref{Lem:quasiconcave} and
Lemma~\ref{Lem:compute_lij}, we have to estimate
$\limiter_j^{i,s}\in [0,\limiter_j^{i,e}]$ so that
$\Psi(\bsfU_i^{L,n+1} + \limiter \bsfP_{ij})\ge 0$ for all
$\limiter\in [0, \limiter_j^{i,s}]$, with
$\Psi(\bsfU) := \Phi(\bsfU) - s_i^{\min}$, where we recall that
$\Phi(\bsfU):=s(\rho,e(\bsfU))$ is the specific entropy as a function of the
conservative variables. 
 
\begin{lemma} Let $t_0$ be defined as follows: 
  \textup{(i)} If $\Psi(\bsfU_i^{L,n+1}+ \bsfP_{ij})\ge 0$, then we
  set $t_0=1$;
  \textup{(ii)} If $\Psi(\bsfU_i^{L,n+1}+ \bsfP_{ij})<0$ and
  $\Psi(\bsfU_i^{L,n+1})>0$, we set $t_0$ to be the unique positive
  root to the equation $\Psi(\bsfU_i^{L,n+1} + t \bsfP_{ij})=0$;
  \textup{(iii)} If $\Psi(\bsfU_i^{L,n+1}+ \bsfP_{ij})<0$ and
  $\Psi(\bsfU_i^{L,n+1})=0$, the equation
  $\Psi(\bsfU_i^{L,n+1} + t \bsfP_{ij})=0$ has exactly two roots
  (possibly equal) and we take $t_0$ to be the largest nonnegative
  root.  More precisely, if $D\psi(\bsfU_i^{L,n+1})\cdot \bsfP_{ij}\le0$ then $t_0=0$,
  and if $D\psi(\bsfU_i^{L,n+1})\cdot \bsfP_{ij}>0$ then $t_0>0$ is the unique positive root
  of  $\Psi(\bsfU_i^{L,n+1} + t \bsfP_{ij})=0$ and has to be computed.
  Then setting $\limiter_j^{i,s}= \min(t_0,\limiter_j^{i,e})$,
  we have $\Psi(\bsfU_i^{L,n+1}+ \limiter \bsfP_{ij})\ge 0$ for all
  $\limiter \in [0, \limiter_j^{i,s}]$.
\end{lemma}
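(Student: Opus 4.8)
The plan is to replace the merely quasiconcave constraint $\Psi(\bsfU)=\Phi(\bsfU)-s_i^{\min}\ge 0$ by a genuinely concave one along the segment $t\mapsto \bsfU_i^{L,n+1}+t\bsfP_{ij}$. The key preliminary observation is that, because $\limiter_j^{i,e}\le \limiter_j^{i,\rho}$ by \eqref{def_of_le}, the density and internal-energy limiting steps already control the segment: for $t\in[0,\limiter_j^{i,e}]$ one has $\rho(\bsfU_i^{L,n+1}+t\bsfP_{ij})\ge \rho_i^{\min}>0$ and $(\rho e)(\bsfU_i^{L,n+1}+t\bsfP_{ij})\ge \varepsilon_i^{\min}>0$ (both bounds being strictly positive since $\bsfU^n$ is admissible and each $\overline{\bsfU}_{ij}^{n+1}$ has positive density and internal energy), hence $e=(\rho e)/\rho>0$ there; thus the whole segment lies in $\calA=\{\bsfU\st\rho>0,\,e>0\}$. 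I would then set $\psi(\bsfU):=\rho\bigl(\Phi(\bsfU)-s_i^{\min}\bigr)$, which is concave on $\calA$ (recall \S\ref{Sec:bounds}), and let $g(t):=\psi(\bsfU_i^{L,n+1}+t\bsfP_{ij})$. Then $g$ is concave on $[0,\limiter_j^{i,e}]$; since $\rho>0$ on that interval, $g(t)\ge 0$ iff $\Psi(\bsfU_i^{L,n+1}+t\bsfP_{ij})\ge 0$, so the roots of $\Psi(\bsfU_i^{L,n+1}+t\bsfP_{ij})=0$ are exactly the roots of $g$; and $g(0)\ge 0$ by Theorem~\ref{Thm:UL_is_invariant}(i).

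Next I would invoke the elementary fact that a concave function that is nonnegative at the two endpoints of an interval is nonnegative on the whole interval. Since $g$ is concave on $[0,\limiter_j^{i,e}]\supset[0,\limiter_j^{i,s}]$ and $g(0)\ge0$, it suffices to prove $g(\limiter_j^{i,s})\ge 0$, i.e.\ $\Psi\bigl(\bsfU_i^{L,n+1}+\limiter_j^{i,s}\bsfP_{ij}\bigr)\ge 0$, with $\limiter_j^{i,s}=\min(t_0,\limiter_j^{i,e})$.

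This is settled case by case. In case (i), $\Psi(\bsfU_i^{L,n+1}+\bsfP_{ij})\ge 0$ forces $\bsfU_i^{L,n+1}+\bsfP_{ij}\in\calA$, hence (by convexity of $\calA$) the segment $[0,1]$ lies in $\calA$, $g$ is concave there with $g(0),g(1)\ge0$, so $g\ge0$ on $[0,1]\ni\limiter_j^{i,s}$. In case (ii), $g(0)>0$, and the superlevel set $\{t\in[0,\limiter_j^{i,e}]:g(t)\ge0\}$ is an interval $[0,\tau]$ by concavity; if $\tau<\limiter_j^{i,e}$ then $\tau$ is a positive root of $\Psi(\bsfU_i^{L,n+1}+t\bsfP_{ij})=0$ (again because $\rho>0$ there), so $\tau=t_0$ by uniqueness, while otherwise $g>0$ on all of $[0,\limiter_j^{i,e}]$ and $\tau=\limiter_j^{i,e}\le t_0$; either way $g\ge0$ on $[0,\limiter_j^{i,s}]$. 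In case (iii), $g(0)=0$, and since $g$ is concave its one-sided derivative $g'(0)=\rho(\bsfU_i^{L,n+1})\,D\Phi(\bsfU_i^{L,n+1})\cdot\bsfP_{ij}$ has the sign of the quantity $D\psi(\bsfU_i^{L,n+1})\cdot\bsfP_{ij}$ appearing in the statement: if it is $\le 0$, concavity gives $g(t)\le g'(0)t\le0$ for $t\ge0$, so $t_0=0$ and $\Psi(\bsfU_i^{L,n+1})=0\ge0$ trivially; if it is $>0$, then $g$ increases away from $0$, so $\{t\ge0:g(t)\ge0\}$ is a nondegenerate interval $[0,t_0]$ whose right endpoint is the unique positive root of $\Psi(\bsfU_i^{L,n+1}+t\bsfP_{ij})=0$ (the ``two roots'' in the statement being the two endpoints of that interval, possibly coinciding at $0$), and $g\ge0$ on $[0,t_0]\supset[0,\limiter_j^{i,s}]$.

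I expect the two genuine difficulties to be bookkeeping rather than analytic. The first is making airtight that the segment stays in $\calA$ on $[0,\limiter_j^{i,e}]$: this is precisely what licenses passing from the quasiconcave $\Psi$ to the concave surrogate $\psi=\rho\Psi$, and it is the reason the limiters must be applied in the nested order $\limiter_j^{i,\rho}\ge\limiter_j^{i,e}\ge\limiter_j^{i,s}$ with strictly positive bounds $\rho_i^{\min},\varepsilon_i^{\min}$. The second is case (iii): one must verify that $\Psi(\bsfU_i^{L,n+1}+t\bsfP_{ij})=0$ really has ``exactly two roots'' and that the sign of $D\psi(\bsfU_i^{L,n+1})\cdot\bsfP_{ij}$ correctly decides whether $t_0=0$ — both of which drop out once one works with $g$, a concave scalar function, rather than with $\Psi$ itself. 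As a shortcut, the whole argument can be compressed by invoking Lemma~\ref{Lem:compute_lij} directly: $\Psi$ is quasiconcave on $\calA$ and $\bsfU_i^{L,n+1}\in L_0(\Psi)$, so that lemma already yields $\Psi(\bsfU_i^{L,n+1}+\limiter\bsfP_{ij})\ge0$ for $\limiter\in[0,\limiter^i_j]$, and the three cases (i)--(iii) merely exhibit the closed form $t_0=\limiter^i_j$ (capped at $1$), whence $\limiter_j^{i,s}=\min(t_0,\limiter_j^{i,e})\le\limiter^i_j$ concludes the proof.
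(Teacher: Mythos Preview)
Your proof is correct. The paper's argument differs slightly in execution: rather than passing to the concave surrogate $\psi=\rho(\Phi-s_i^{\min})$ and the scalar function $g(t)$, it works directly with the quasiconcavity of $\Psi$. The upper level set $L_0=\{\bsfU:\Psi(\bsfU)\ge 0\}$ is convex, so any line through $\bsfU_i^{L,n+1}\in L_0$ meets $\partial L_0$ in at most two parameters $t_-\le t_+$, with $t_-\le 0$; the three cases then simply identify $t_0$ with $\min(t_+,1)$. Your route via $g$ buys two things: it makes the domain issue explicit (the segment must lie in $\calA$ for $\psi$ to be defined and concave, which is precisely why the nesting $\limiter_j^{i,s}\le\limiter_j^{i,e}\le\limiter_j^{i,\rho}$ matters---a point the paper's proof glosses over), and it dovetails with the discussion immediately following the lemma, where the line search is carried out on a concave $h(t)$ by Newton/secant. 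Your closing observation that the whole statement is an instance of Lemma~\ref{Lem:compute_lij}, with (i)--(iii) merely exhibiting $t_0$ in closed form, is exactly right.
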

\begin{proof} Let us first observe that the equation
  $\Psi(\bsfU_i^{L,n+1}+ t\bsfP_{ij})=0$ has exactly two roots
  (possibly equal) because the upper level set
  $L_0=\{\bsfU\st \Psi(\bsfU)\ge 0$ is convex and any line that
  intersects the upper level set crosses the boundary at two points,
  say $t_-\le t_+$, ($t_-= t_+$ when the line is tangential to the
  boundary of the upper level set). Note that $t_{-} \le 0$ since
  $\Psi(\bsfU_i^{L,n+1})\ge 0$.
  \textup{(i)} If $\Psi(\bsfU_i^{L,n+1}+ \bsfP_{ij})\ge 0$, then
  $t_+\ge 1$ and the entire segment
  $\{\bsfU_i^{L,n+1}+ t\bsfP_{ij}\st t\in [0,t_0=1]\}$ in $L_0$ by
  convexity.
  \textup{(ii)} If $\Psi(\bsfU_i^{L,n+1}+ \bsfP_{ij})<0$ and
  $\Psi(\bsfU_i^{L,n+1})>0$, then $t_+\in (0,1)$, and upon setting $t_0=t_+$,
 the entire segment
  $\{\bsfU_i^{L,n+1}+ t\bsfP_{ij}\st t\in [0,t_0]\}$ in $L_0$ by
  convexity.
  \textup{(iii)}  If $\Psi(\bsfU_i^{L,n+1}+ \bsfP_{ij})<0$ and
  $\Psi(\bsfU_i^{L,n+1})=0$, there are two possibilities: (i) $D\psi(\bsfU_i^{L,n+1})\cdot \bsfP_{ij}\le 0$; 
  and (ii) $D\psi(\bsfU_i^{L,n+1})\cdot \bsfP_{ij}>0$. If $D\psi(\bsfU_i^{L,n+1})\cdot \bsfP_{ij}\le 0$ then 
  by convexity $\Psi(\bsfU_i^{L,n+1}+ t\bsfP_{ij})<0$ for all $t>0$. Hence,    $t_+=0$ is the largest 
  nonnegative toot of the equation $\Psi(\bsfU_i^{L,n+1}+ t\bsfP_{ij})=0$ and therefore $t_0=t_+=0$.
   In the remaining case, $D\psi(\bsfU_i^{L,n+1})\cdot \bsfP_{ij}>0$, we have that $0<t_+<1$ and 
   $t_0=t_+$.
\end{proof}

Let us now explain how the above line search can be done efficiently. Thermodynamic principles 
imply that there exists a smooth function $g: \Real_{+} \CROSS\Real\to\Real_{+}$ such that 
$\rho e = g(s,\rho)$. Note that
the identity $\partial_s e(\rho,s) = \frac{1}{\partial_e s(\rho,e)}$
together with the fundamental thermodynamic inequality $\partial_e s(\rho,e)>0$,
which is equivalent to the temperature being positive, implies
$\partial_s g(\rho,s)>0$.
 Since $\partial_s g > 0$, the minimum
principle on the specific entropy $\Psi(\bsfU) := \Phi(\bsfU) - s_i^{\min}\ge 0$ is equivalent to enforcing
$ g(s,\rho)=\rho e \ge g(s_i^{\min},\rho)$.

When the function $g(s,\rho)$ satisfies $\partial_{\rho\rho} g \ge 0$
and up to a change of notation, the above constraint can be
reformulated as follows:
$\Psi(\bsfU):= \rho e(\bsfU) - g(s_i^{\min},\rho) \ge 0$.  Note that
$\partial_{\rho\rho} g \ge 0$ implies that the new function
$\Psi(\bsfU)$ is concave with respect to the conservative variables;
as a result, the line search
$h(t):=\psi(\bsfU_i^{L,n+1}+ t\bsfP_{ij})=0$ can be done efficiently
because $h$ is concave and $h(0)\ge0$. If $h(1)\ge0$ we set $t_0=1$
and if $h(1) < 0$ we use a combination of secant and Newton method to
find the unique $0\le t_0 <1$ such that $h(t_0)=0$.
For example, the covolume equation of state falls into this category
since in this case we have
$\rho e = \frac{\rho^\gamma}{(1-b\rho)^{\gamma-1}}
\exp((\gamma-1)(s-s_0))=:g(s,\rho)$.
This is also the case for the stiffened gas equation of state,
$\rho e = e_0\rho + p_\infty(1-b\rho) + 
\frac{\rho^\gamma}{(1-b\rho)^{\gamma-1}}
\exp((\gamma-1)(s-s_0))
=:g(s,\rho)$ where $e_0$, $s_0$ and $p_\infty$ are are constant coefficients
characteristic of the thermodynamic properties of the fluid, see \cite{Mettayer_Saurel_2016}
for details.

In the general case, \ie when $g(s,\rho)$ does not satisfy
$\partial_{\rho\rho} g \ge 0$, we can use a different strategy for
imposing $\Psi(\bsfU) = \Phi(\bsfU) - s_i^{\min} \ge 0$.  Namely,
using that $\rho > 0$ and using again a change of notation, we
transform the constraint to
$\Psi(\bsfU):=\rho\Phi(\bsfU) - s_i^{\min} \rho \ge 0$.  Note that the
function $-\rho\Phi(\bsfU)$ is a mathematical entropy for the Euler
system and under the standard assumptions (hyperbolicity and positive
temperature) it is convex, see
\cite[Thm.~2.1]{Harten_Lax_Levermore_Morojoff_1998}.  Therefore, the
line search $h(t):=\psi(\bsfU_i^{L,n+1}+ t\bsfP_{ij})=0$ can be done
efficiently because $h$ is concave and $h(0)\ge0$. If $h(1)\ge0$ we
set $t_0=1$ and if $h(1) < 0$ we use a combination of secant and
Newton method to find the unique $0\le t_0 <1$ such that $h(t_0)=0$.

\subsection{Relaxation} \label{Sec:relaxation} It is observed in
\cite[\S3.3]{Khobalatte_Perthame_1994} that strictly enforcing the
minimum principle on the specific entropy degrades the converge rate
to first-order; it is said therein that ``It seems impossible to
perform second-order reconstruction satisfying the conservativity
requirements $\ldots$ and the maximum principle on $S$''. We have also
observed this phenomenon.  Moreover, it is well known that, when
applied to scalar conservation equations, limitation (in some broad
sense) reduces the accuracy to first-order near maxima and minima of
the solution. One typical way to address this issue in the finite
volume literature consists of relaxing the slope reconstructions; see
\cite{Harten_Osher_1987}, \cite[\S2.1]{Schmidtmann_Abgrall_Torrilhon_2016}. In the
present context, since we do not have any slope to reconstruct, we are
going to relax the constraints so that the violation of the constraint
is second-order accurate.

\subsubsection{Relaxation on the density and the internal energy}
Let us denote by $\varrho$ one of the quantities that we may want to limit
from below, excluding the specific entropy, say $\rho$, $-\rho$, or
$(\rho e)$ and let $\varrho^{\min}$ be the corresponding bound given
by the technique described in \S\ref{Sec:bounds}. For each
$i\in \calI$, we set
$\Delta^2 \varrho_i^n := \sum_{i\ne j\in \calI(\Dom_i)} \varrho_i^n -
\varrho_j^n$ and we define
\begin{align}
\overline{\Delta^2 \varrho_i^n} &:= \frac{1}{2\text{card}(\calI(\Dom_i))}
\sum_{i\ne j\in \calI(\Dom_i)} (\frac12\Delta^2 \varrho_i^n +\frac12\Delta^2\varrho_j^n),\\
\widetilde{\Delta^2 \varrho_i^n} &:= \text{minmod}\Big\{\frac12\Delta^2\varrho_j^n \st j\in\calI(\Dom_i)\Big\},
\end{align}
where the minmod function of a finite set is defined to be zero if
there are two numbers of different sign in this set, and it is equal
to the number whose absolute value is the smallest otherwise. Then we
propose two types of relaxation defined as follows:
\begin{align}
  \overline{\varrho^{\min}_i} 
= \max(0.99\varrho^{\min}_i, \varrho^{\min}_i - |\overline{\Delta^2 \varrho_i^n}|) \label{average_relaxation}\\
  \widetilde{\varrho^{\min}_i} 
= \max(0.99\varrho^{\min}_i, \varrho^{\min}_i - |\widetilde{\Delta^2 \varrho_i^n}|).
\end{align}
When doing limitation we use either $\overline{\varrho^{\min}_i}$ or
$\widetilde{\varrho^{\min}_i}$ instead of $\varrho^{\min}_i$. It is
shown in the numerical section that both relaxations are robust.  

\begin{remark}[relaxation vs. no relaxation]
  We have observed numerically that the proposed method is
  second-order accurate in the $L^1$-norm without relaxation if
  limitation is done on the density and the internal energy.
Relaxation is necessary only to get second-order accuracy in the $L^\infty$-norm.
We have observed though that the minmod relaxation
is slightly more restrictive than the other one since it does
not deliver second-order accuracy in the maximum norm; only the
averaging relaxation \eqref{average_relaxation} has been found to give
second-order in the $L^\infty$-norm.
\end{remark}

\begin{remark}[positivity]
  Note that the somewhat ad hoc threshold $0.99\varrho^{\min}_i$ in the
  above definitions imply that one never relaxes more than 1\% of the
  legitimate lower bound. In particular, when applied to the density or
  the internal energy the above relaxation
  guarantees positivity of the density and the internal energy.
\end{remark}

\subsubsection{Relaxation on the specific entropy} \label{Sec:relax_specific_entropy}
We proceed as in \cite[\S3.3]{Khobalatte_Perthame_1994} and relax the
lower bound on the specific entropy more aggressively since in smooth
regions this function is constant. Let $\varrho$ be the quantity
associated with the constraint on the specific entropy; it could be
$s$, $\exp(s)$, or $\rho e/\rho^\gamma$ in the case of polytropic
gases, depending on the way one chooses to enforce the minimum
principle on the specific entropy (see \S\ref{Sec:specific_entropy}).
Let $\bx_{ij}=\frac12(\bx_i+\bx_j)$. We measure the local variations
of $\varrho$ by setting
$\Delta \varrho_i^n = \max_{i\ne j\in\calI(\Dom_i)}
(\varrho^n(\bx_{ij}) - \varrho_i^{\min})$
and we relax $\varrho_i^{\min}$ by setting
\begin{equation}
\overline{\varrho_i^{\min}} 
= \max(0.99\varrho_i^{\min}, \varrho_i^{\min} - \Delta \varrho_i^n).\label{relax_specific_entropy}
\end{equation}
Note that contrary to the appearances, and as already observed in
\citep{Khobalatte_Perthame_1994}, the size of the relaxation is
$\calO(h^2)$: In the vicinity of shocks, there is no need for
relaxation since the first-order viscosity takes over and thereby
makes the solution minimum principle preserving on the specific
entropy. In smooth regions, \ie isentropic regions, the specific
entropy is constant and $\Delta \varrho_i^n$ measures the local
curvature of $s$ induced by the nonlinearity of $s$, that is to say
$\Delta \varrho_i^n$ is $\calO(h^2)$.
 
\begin{remark}[positivity]
  The threshold $0.99\varrho^{\min}_i$ guarantees positivity of the
  internal energy. We have observed numerically that this threshold is
  never reached when the mesh is reasonably fine enough.
\end{remark}

\section{Numerical illustrations}
\label{Sec:Numerical_illustrations}

We report in this section numerical tests we have done to
illustrate the performance of the proposed method. All the tests are
done with the equation of state $p=(\gamma-1)\rho e$, \ie
$(\gamma-1) s(\rho,e) = \exp(\rho e/\rho^\gamma)$.
\subsection{Technical details}
Three different codes implementing the method described in the paper
have been written to ensure reproducibility. Limiting is done only
once in the three codes, \ie $k_{\max}=1$.

The first one, henceforth called Code~1, does not use any particular
software.  It is based on Lagrange elements on simplices. This code
has been written to be dimension-independent, \ie the same data
structure and subroutines are used in one dimension and in two
dimensions. The two-dimensional meshes used for Code~1 are nonuniform
triangular Delaunay meshes. All the computations reported in the paper
are done with continuous $\polP_1$ elements. The high-order method
uses the entropy viscosity commutator described in
\eqref{def_of_Rin}-\eqref{def_dij_EV} with the entropy
$p^{\frac1\gamma}$.  Limitation on the density is enforced by using
the technique described in \S\ref{Sec:density} . The bounds on the
density are relaxed by using the averaging technique described in
\S\ref{Sec:relaxation}.  The minimum principle on the specific entropy
$\exp((\gamma-1)s) \ge \exp((\gamma-1)s^{\min})$ is enforced by using
the method described in \S\ref{Sec:specific_entropy} with the
constraint $\Psi(\bsfU):= \rho e - \varrho^{\min} \rho^\gamma\ge 0$,
where we recall that $\rho e/\rho^\gamma = \exp((\gamma-1)s)$.  The
lower bound on the specific entropy is defined by using
$\varrho^{\min}_i = \min_{j\in \calI(\Dom_i)} \rho_i^n
e_i^n/(\rho_i^n)^\gamma$
(instead of \eqref{smindef}) and $\varrho^{\min}_i$ is relaxed by
using \eqref{relax_specific_entropy} with
$\varrho= \rho e/\rho^\gamma$. No limitation on the internal energy is
applied in Code~1; the positivity of the internal energy is guaranteed
by the minimum principle on the specific entropy.

The second code, henceforth called Code~2, is based on the open-source
finite element library deal.II, see \cite{dealII85,
  BangerthHartmannKanschat2007}. The tests reported in the paper are
done with continuous $\polQ_1$ (quadrilateral) elements. The code is
written in a dimension-independent fashion and all the computational
tasks (\eg assembly, linear solvers and output) are implemented in
parallel via MPI. This code implements the smoothness-based high-order
graph viscosity described in
\S\ref{Sec:smoothness_based_viscosity}. All the computations reported
in the paper are done with $g(\bu) = S(\bu)$ and
$\psi(\alpha)=\alpha^4$. (We have verified that the other choices for
$g(\bu)$ mentioned in \S\ref{Sec:smoothness_based_viscosity} produce
comparable results.) As stated at the beginning of
  \S\ref{Sec:smoothness_based_viscosity}, this method introduces
  additional diffusion close to local extrema (whether smooth or
  not). While this does not affect the second-order decay rates in the
  $L^1$-norm, it degrades the accuracy in the $L^\infty$-norm.
  Limitation is done on the density and the
  specific entropy exactly as explained above for Code~1.  The minimum
  principle on the specific entropy is relaxed as explained in
  \S\ref{Sec:relax_specific_entropy}, but no relaxation is applied on
  the density bounds.

The last code, henceforth called Code~3, uses the open-source finite
element library FEniCS, see \eg \cite{Fenics2003}, and the
computations are done on simplices. The implementation in FEniCS is
independent of the space dimension and the polynomial degree of the
approximation. The library is fully parallel. All the numerical
integrations are done exactly by automatically determining the
quadrature degree with respect to the complexity of the underlying
integrand and the polynomial space. The results reported in the paper
use the entropy viscosity method with the entropy
$p^{\frac1\gamma}$. The limitation and bound relaxation is done on the density and the
specific entropy exactly like in Code~1. We refer the reader to the
description of Code~1 for the details.

The time stepping is done in the three codes by using the SSP(3,3)
method (three stages, third-order), see
\cite[Eq.~(2.18)]{Shu_Osher1988} and
\cite[Thm.~9.4]{Kraaijevanger_1991}. The time step is recomputed at
each time step by using the formula
$\dt = \text{CFL}\CROSS \min_{i\in \calI} \frac{m_i}{|d_{ii}^{L,n}|}$
with $d_{ii}^{L,n}=-\sum_{ij} d_{ij}^{L,n}$ given in
\eqref{Def_of_dij_I}.

When working with manufactured solutions, for $p\in [1,\infty]$, we
compute a consolidated error indicator at time $t$ by adding the
relative error in the $L^p$-norm of the density, the momentum, and the
total energy as follows:
\begin{align}
  \delta_p(t):=\frac{\|\rho_h(t)-\rho(t)\|_{L^p(\Dom)}}{\|\rho(t)\|_{L^p(\Dom)}} +
  \frac{\|\bbm_h(t)-\bbm(t)\|_{\bL^p(\Dom)}}{\|\bbm(t)\|_{\bL^p(\Dom)}} +
  \frac{\|E_h(t)-E(t)\|_{L^p(\Dom)}}{\|E(t)\|_{L^p(\Dom)}}. \label{def_delta_t}
\end{align}
As some tests may exhibit superconvergence effects we also consider
the fully discrete consolidated error indicator $\delta_p^\Pi(t)$
defined as above with $\rho(t)$, $\bbm(t)$, and $E(t)$ replaced by
$\Pi_h\rho(t)$, $\Pi_h\bbm(t)$, and $\Pi_hE(t)$, where $\Pi_h$ is the
Lagrange interpolation operator.

\subsection{1D smooth wave}
We start with a one-dimensional test whose purpose is to estimate
the convergence rate of the method with a very smooth solution. We
consider the following exact solution to the Euler equations: $v(x,t) = 1$, $p(x,t) = 1$ and
\begin{align}
\rho(x,t) = \begin{cases}  
1 + 2^6(x_1-x_0)^{-6}(x-t-x_0)^3 (x_1-x+t)^3 & \text{if $x_0\le x-t< x_1$}\\
1 & \text{otherwise}
\end{cases}
\end{align}
with $x_0=0.1$, $x_1=0.3$ and $\gamma=\frac{7}{5}$. The computational
domain is $\Dom=(0,1)$ and the computation is done from $t=0$ to
$t=0.6$. The consolidated error indicator in the maximum norm
$\delta_\infty(t)$ is reported in Table~\ref{Table:Leblanc_num}. Note
that we report the discrete error indicator $\delta_\infty^\Pi(t)$ for
Code~1 (based on the Entropy Viscosity Method) in order to show that
we obtain $\calO(h^3)$ super-convergence in compliance with the
theoretical result stated in \cite[Prop~2.2]{Guermond_pasquetti_2013}.
Code~2, which we recall is based in the smoothness of the mathematical
entropy, delivers $\calO(h^{1.5})$ as expected due to clipping effects
induced by the smoothness indicator.

\begin{table}[ht]\small \centering
  \caption{1D smooth wave, $\mathbb{P}_1$ meshes, Convergence tests with Code~1 and Code~2, $\text{CFL}=0.25$.}
  \label{Table:1D_wave_num} 
  \begin{tabular}{|c|c|c|c|c|}
    \hline
    \multirow{2}{*}{\# dofs} 
 & \multicolumn{2}{|c|}{Code~1} &  \multicolumn{2}{|c|}{Code~2} \\ \cline{2-5} 
 &  $\delta_\infty^\Pi(t)$ & rate    &  $\delta_\infty(t)$ & rate \rule{0pt}{2.5ex}\\ \hline
100	& 6.34E-02 &     & 2.32E-01 &      \\ \hline
200	& 1.62E-02 &1.96 & 8.30E-02 & 1.48 \\ \hline
400	& 2.69E-03 &2.59 & 2.87E-02 & 1.53 \\ \hline
800	& 3.74E-04 &2.85 & 9.66E-03 & 1.57 \\ \hline
1600	& 4.62E-05 &3.02 & 3.22E-03 & 1.58 \\ \hline
3200	& 5.89E-06 &2.97 & 1.07E-03 & 1.58 \\ \hline
6400	& 7.37E-07 &3.00 & 3.74E-04 & 1.52 \\ \hline
\end{tabular}
\end{table}

\subsection{Rarefaction wave} \label{Sec:rarefaction} We now consider
a Riemann problem with a solution whose components are all continuous
and whose derivatives have bounded variations. The best-approximation
error in the $L^1$-norm on quasi-uniform meshes is then $\calO(h^2)$.
The Riemann problem in question has the following data:
$(\rho_L,v_L,p_L)=(3,c_L,1)$,
$(\rho_R,v_R,p_R)=(\frac12,v_L+\frac{2}{\gamma-1}(c_L-c_R),
p_L(\frac{\rho_R}{\rho_L})^\gamma)$,
where $c_L = \sqrt{\gamma p_L/\rho_L}$,
$c_R = \sqrt{\gamma p_R/\rho_R}$. The equation of state is a gamma-law
with $\gamma = \frac{7}{5}$. The exact solution to this problem is a
rarefaction wave which can be constructed analytically, see \eg
\cite[\S4.4]{Toro_2009}. The solution is given in
Table~\ref{Table:rarefaction}. In this table, the ratio
$\xi:=\frac{x-x_0}{t}$ is the self-similar variable, where $x_0$ is
the location of the discontinuity at $t=0$.  This problem is quite
challenging for any method enforcing the minimum principle on the
specific entropy. We have observed that the convergence rate on this problem reduces
to $\calO(h)$ if the minimum on the specific entropy is not relaxed
(see also \cite[\S3.3]{Khobalatte_Perthame_1994}).
\begin{table}[ht]\caption{Solution to the rarefaction
    wave.}\label{Table:rarefaction}\centering
\begin{tabular}{|c|c|c|c|} \hline
 & $\xi\le v_L-c_L$& $v_L-c_L< \xi\le v_R-c_R$& $v_R-c_R<\xi$\\ \hline
$\rho$&$\rho_L$&$\rho_L \big(\frac{2}{\gamma+1}
  + \frac{\gamma-1}{\gamma+1} \frac{v_L-\xi}{c_L}\big)^{\frac{2}{\gamma-1}}$ & $\rho_R$\\\hline
$v$ & $v_L$ & $\frac{2}{\gamma+1}(c_L + \frac{\gamma-1}{2}v_L +\xi)$& $v_R$ \\\hline
$p$ & $p_L$&  $p_L\big(\frac{2}{\gamma+1}
  + \frac{\gamma-1}{\gamma+1}\frac{v_L-\xi}{c_L}\big)^{\frac{2\gamma}{\gamma-1}}$ & $p_R$\\ \hline
\end{tabular}
\end{table}

We run Code~1, Code~2, and Code~3 on the computational domain
$\Dom=(0,1)$ with $x_0=0.2$ and the initial time is
$t_0=\frac{0.2}{v_R-c_R}$.  The initial data is the exact solution at
$t=\frac{0.2}{v_R-c_R}$ given in Table~\ref{Table:rarefaction}.  The
simulation are run until $t=0.5$. The consolidated error indicator
$\delta_1(t)$ defined in \eqref{def_delta_t} is reported in
Table~\ref{Table:rarefaction_num} for Code~1 and Code~2 only for brevity.
This series of tests shows that the proposed method, with limitation
of the density and the specific entropy as described in
\S\ref{Sec:density}-\S\ref{Sec:specific_entropy}, converges with rate
at least $\calO(h^{1.5})$ on the rarefaction wave problem. We
observe that the low-order method is indeed asymptotically first-order
(the rate is $0.96$ for $12800$ grid points). 

\begin{table}[htb]\small \centering
  \caption{Rarefaction wave, $\mathbb{P}_1$ meshes, 
    Convergence tests with Code~1 and Code~2, $\text{CFL}=0.25$.}
  \label{Table:rarefaction_num} 
  \begin{tabular}{|c|c|c|c|c|c|c|c|c|}
    \hline
    \multirow{2}{*}{\# dofs} 
 & \multicolumn{2}{|c|}{Code~1} & \multicolumn{2}{|c|}{Code~2} & \multicolumn{2}{|c|}{Galerkin}  & \multicolumn{2}{|c|}{Low-order} \\ \cline{2-9} 
        & $\delta_1(t)$ & rate & $\delta_1(t)$ & rate   & $\delta_1(t)$ & rate & $\delta_1(t)$ & rate \\ \hline
100	& 1.01E-03    &      & 3.33E-03    &        & 1.44E-03    &      & 5.10E-02    &      \\
200	& 3.28E-04    & 1.62 & 1.08E-03    & 1.61 & 4.38E-04    & 1.71 & 2.96E-02    & 0.78 \\
400	& 1.13E-04    & 1.54 & 3.57E-04    & 1.61 & 1.42E-04    & 1.62 & 1.68E-02    & 0.82 \\
800	& 4.02E-05    & 1.49 & 1.18E-04    & 1.60 & 4.73E-05    & 1.59 & 9.23E-03    & 0.86 \\
1600	& 1.44E-05    & 1.48 & 3.96E-05    & 1.58 & 1.60E-05    & 1.57 & 4.96E-03    & 0.89 \\
3200	& 5.11E-06    & 1.49 & 1.31E-05    & 1.59 & 5.47E-06    & 1.55 & 2.62E-03    & 0.92 \\
6400	& 1.75E-06    & 1.54 & 4.32E-06    & 1.61 & 1.82E-06    & 1.59 & 1.37E-03    & 0.94 \\
12800	& 5.71E-07    & 1.62 & 1.38E-06    & 1.64 & 5.83E-07    & 1.64 & 7.05E-04    & 0.96 \\ \hline
    \end{tabular}
\end{table}

When comparing the results from Code~1 with the Galerkin solution, we
observe that the extra dissipation induced in Code~1 by the entropy
viscosity and limitation is of the order of the truncation error,
which is optimal. That is, the method does not introduce any
extraneous dissipation on smooth solutions. Note in passing that
  the maximum norm error indicator $\delta_\infty(t)$ has also been computed
  for this test (not reported here for brevity), yielding
  the rate $\calO(h)$ for Code~1 and  $\calO(h^{0.75})$ for
  Code~2. The rate $\calO(h)$ is optimal since the solution is in
  $\bW^{1,\infty}(\Dom)$

\subsection{Leblanc shocktube}
We continue with a challenging Riemann problem that is known in the
literature as the Leblanc shocktube.  The data are as follows:
$(\rho_L, v_L, p_L) = (1, 0, (\gamma-1)10^{-1})$ and
$(\rho_R, v_R, p_R) = (10^{-3}, 0, (\gamma-1) 10^{-10})$ and the
equation of state is a gamma-law with $\gamma = \frac{5}{3}$.  The
exact solution is described in the Table~\ref{Table:Leblanc}.
Denoting by $x_0$ the location of the discontinuity at $t=0$, the
quantity $\xi=\frac{x-x_0}{t}$ is the self-similar variable and the
other numerical values in the table are given with 15 digit accuracy
by $\rho^{*}_{L} = 5.40793353493162\CROSS 10^{-2}$,
$\rho^{*}_{R} = 3.99999806043000\CROSS 10^{-3}$,
$v^*=0.621838671391735$, $p*=0.515577927650970\CROSS 10^{-3}$,
$\lambda_{1}=0.495784895188979$, $\lambda_3 =0.829118362533470$.
\begin{table}[h] 
\caption{Solution of the Leblanc shocktube.} \label{Table:Leblanc}\centering
\begin{tabular}{|c|c|c|c|c|c|c|}\hline
&$\xi\leq-\frac{1}{3}$&$-\frac{1}{3}<\xi\leq\lambda_1$&$\lambda_1<\xi\leq v^*$&$v^*<\xi\leq\lambda_3$& $\lambda_3< \xi$ \\ \hline
$\rho$&$\rho_L$&$(0.75 - 0.75\xi)^{3}$        &$\rho^{*}_{L}$&$\rho^{*}_{R}$&$\rho_R$\\\hline
$v$   &$v_L$   &$0.75(\frac{1}{3}+\xi)$       &$v^*$        &$v^*$        &$v_R$\\\hline
$p$   &$p_L$   &$\frac{1}{15}(0.75-0.75\xi)^5$&$p^*$        &$p^*$        &$p_R$ \\\hline
\end{tabular}
\end{table}

We do simulations with $x_0=0.33$ until $t=2/3$ with Code~2 and
Code~3. The consolidated error indicator $\delta_1(t)$ is reported in
Table~\ref{Table:Leblanc_num}. The convergence rate on $\delta_1(t)$
is clearly first-order which is optimal for this problem.

\begin{table}[ht]\small \centering
  \caption{Leblanc shocktube, $\mathbb{P}_1$ meshes, Convergence tests with Code~1 and Code~2, $\text{CFL}=0.25$.}
  \label{Table:Leblanc_num} 
  \begin{tabular}{|c|c|c|c|c|c|c|}
    \hline
    \multirow{2}{*}{\# dofs} 
 & \multicolumn{2}{|c|}{Code~1} & \multicolumn{2}{|c|}{Code~2} & \multicolumn{2}{|c|}{Low-order} \\ \cline{2-7} 
      & $\delta_1(t)$ & rate    & $\delta_1(t)$  & rate & $\delta_1(t)$ & rate\\ \hline
100   & 1.26E-01      &         & 1.49E-01       &      & 2.61E-01      &     \\
200   & 7.67E-02      & 0.71    & 9.01E-02       & 0.72 & 1.94E-01      & 0.43 \\
400   & 4.31E-02      & 0.83    & 4.92E-02       & 0.87 & 1.41E-01      & 0.46 \\
800   & 2.25E-02      & 0.94    & 2.61E-02       & 0.91 & 9.95E-02      & 0.50 \\
1600  & 1.13E-02      & 0.99    & 1.34E-02       & 0.96 & 6.74E-02      & 0.56 \\
3200  & 5.73E-03      & 0.99    & 6.83E-03       & 0.97 & 4.40E-02      & 0.62 \\
6400  & 2.85E-03      & 1.01    & 3.42E-03       & 0.99 & 2.78E-02      & 0.66 \\
12800 & 1.43E-03      & 1.00    & 1.70E-03       & 1.00 & 1.73E-02      & 0.68 \\ \hline
\end{tabular}
\end{table}

\subsection{Sod, Lax, Blast wave}
We now illustrate the method on a series of traditional problems
without giving the full tables for the convergence rates for
brevity. We consider the Sod shocktube, the Lax shocktube, and the
Woodward-Collela blast wave. We refer the reader to the literature for
the initial data for these test cases.  The computations are done on
the domain $\Dom=(0,1)$ with $\text{CFL}=0.5$ on four different girds
with Code~1. The final times are $t=0.225$ for the Sod shocktube,
$t=0.15$ for the Lax shocktube, and $t=0.038$ for the Woodward-Collela
blast wave. We show the graph of the density for these three cases and
for the four meshes in Figure~\ref{Fig:Sod_Lax_Blast}.  We have
observed the convergence rate to be between $\calO(h^{0.9})$ and
$\calO(h)$ on $\delta_1(t)$ for both the Sod and the Lax shocktubes,
which is near optimal (results not reported for brevity).

\begin{figure}[htb]
\includegraphics[width=0.32\textwidth,trim={20 7 18 10},clip]{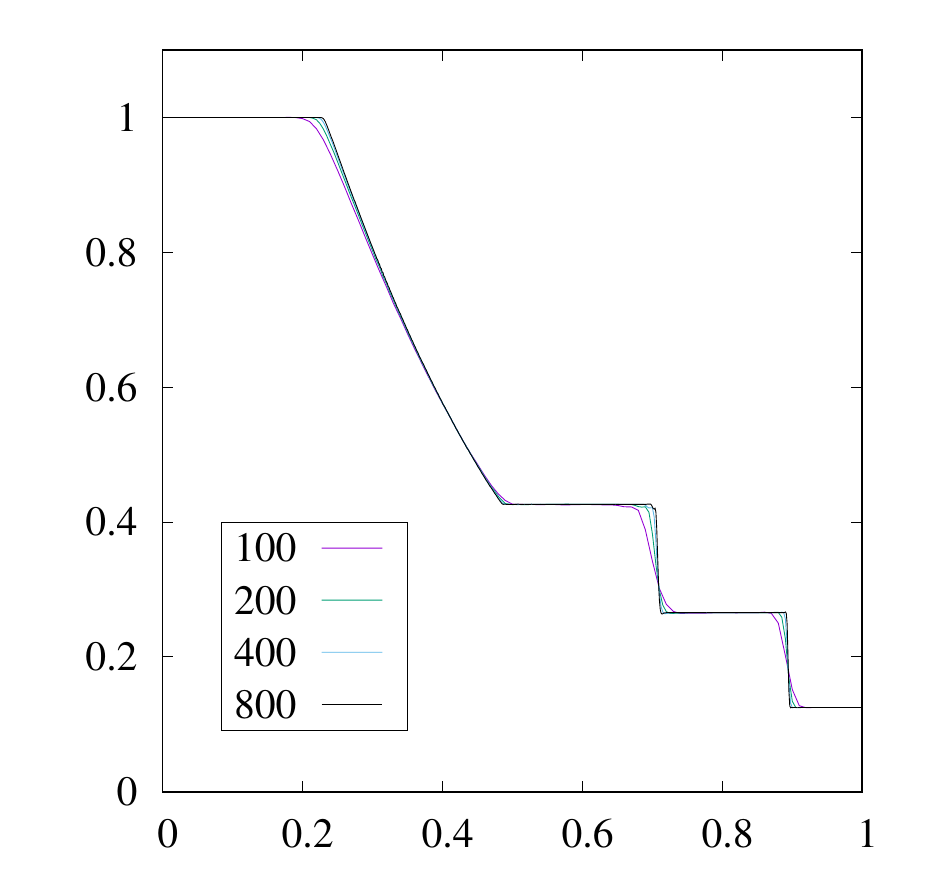}\hfil
\includegraphics[width=0.32\textwidth,trim={20 7 18 10},clip]{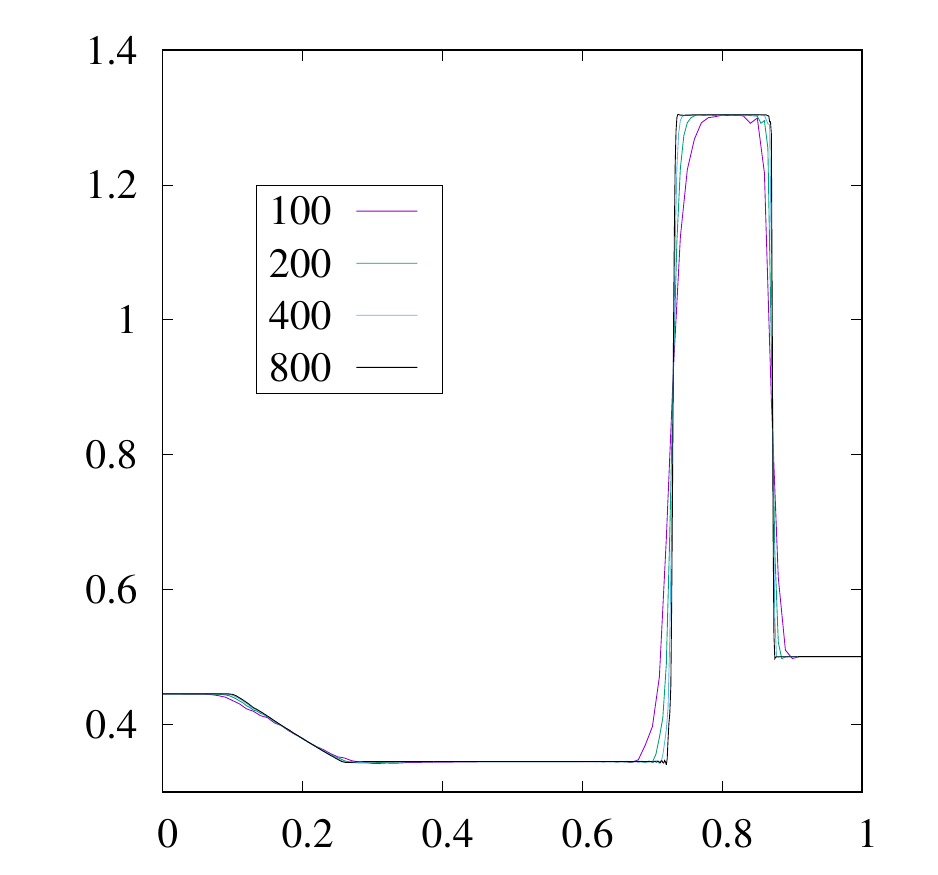}\hfil
\includegraphics[width=0.32\textwidth,trim={20 7 18 10},clip]{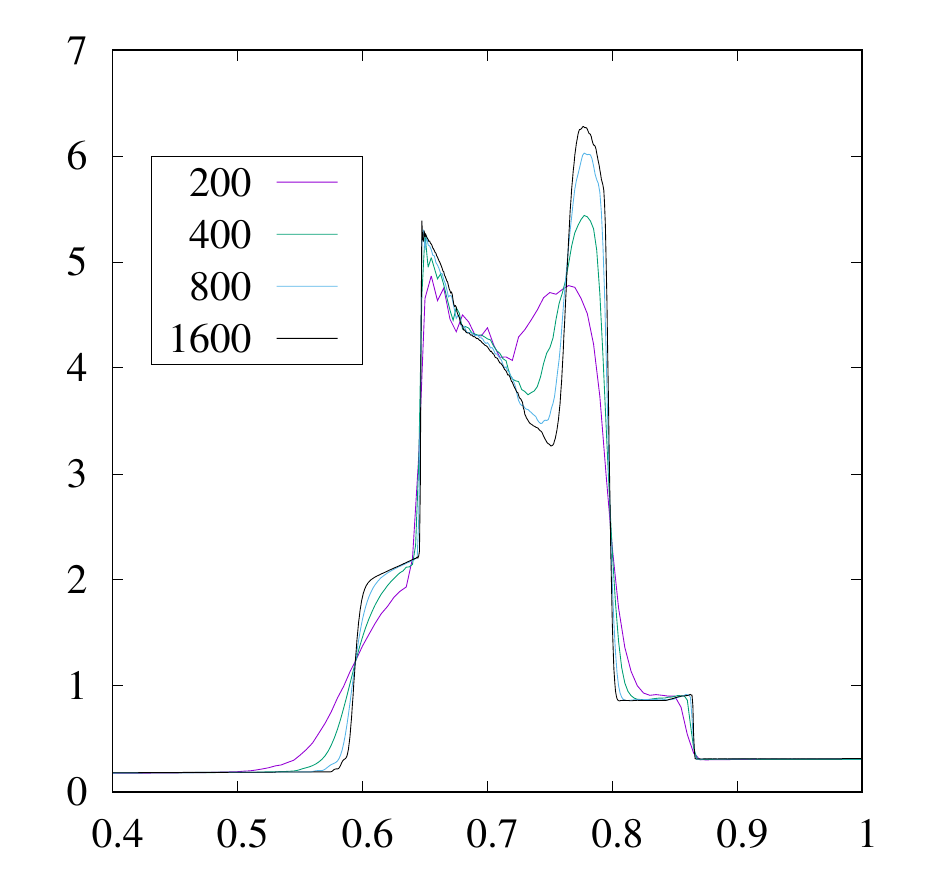}
\caption{Code~1, $CFL=0.5$. Left: Sod shocktube, $t=0.225$; Center: Lax shocktube,
  $t=0.15$; Right: Woodward-Collela blast wave, $t=0.038$.}
\label{Fig:Sod_Lax_Blast}
\end{figure}

\subsection{2D isentropic vortex} \label{Sec:vortex} We now consider a
two-dimensional problem introduced in \cite{Yee_et_al_1999}.  This
test case is often used to assess the accuracy of numerical
schemes. The flow field is isentropic; \ie the solution is smooth and
does not involve any steep gradients or discontinuities.

Let $\rho_\infty = P_\infty = 1$, $\bu_\infty = (2,0)\tr$ be free
stream values. Let us define the following perturbation values for the
velocity and the temperature:
\begin{equation}
    \delta \bu(\bx,t) = \frac{\beta}{2\pi} e^{1-r^2}( - \bar{x}_2, \bar{x}_1),\qquad
    \delta T(\bx,t)   = - \frac{(\gamma-1) \beta^2}{8 \gamma \pi^2} e^{1-r^2},
\end{equation}
where $\beta = 5$ is a constant defining the vortex strength,
$\gamma = \frac{7}{5}$,
$\bar{\bx} = (x_1 - x_{1}^{\text{c}}, x_2 - x_{2}^{\text{c}})$, where
$\bx^{\text{c}} = (x_{1}^{\text{c}}+2t, x_{2}^{\text{c}})$ is the
position of the vortex, and $r^2 = \|\bar\bx\|^2_{\ell^2}$.  The exact
solution is a passive convection of the vortex with
the mean velocity $\bu_\infty $:
\begin{equation}
    \rho(\bx,t) = (T_\infty + \delta T)^{1/{\gamma-1}}, \qquad
    \bu(\bx,t) = \bu_\infty + \delta \bu, \qquad
    p(\bx,t) = \rho^{\gamma}.
\end{equation}

We perform the numerical computation in the rectangle
$\Dom= (-5, 10) \CROSS (-5, 5)$ from $t=0$ until $t=2$.  The initial
mesh consists of $20\CROSS 13$ squares divided by four equilateral
triangles, then the mesh is refined uniformly to compute finer
solutions. The computations are done with Code~3.  The consolidated
error indicator $\delta_\infty(t)$ is reported in
Table~\ref{table:SMOOTH}. Here again we observe second-order accuracy
in the maximum norm.
\begin{table}[ht]\small
  \caption{Isentropic vortex, $\mathbb{P}_1$ meshes,
    convergence tests, $t=2$. Code~3, $\text{CFL}=0.5$.}
  \label{table:SMOOTH} 
\centering
\begin{tabular}{|c|c|c|} \hline
 \# dofs    & $\delta_\infty(t)$ & rate \\ \hline
 2216  & 2.23e-01 &   --   \\ 
 8588  & 4.50e-02 &   2.37 \\ 
34456  & 1.05e-02 &   2.09 \\ 
136748 & 2.82e-03 &   1.91 \\ 
547416 & 9.09e-04 &   1.63 \\ 
     \hline
  \end{tabular} 
\end{table}

\subsection{Mach 3 step} \label{Sec:Mach3} We finish by illustrating
the method on the classical Mach 3 flow in a wind tunnel with a
forward facing step.  The computational domain is
$\Dom=(0,1)\CROSS (0,3)\setminus (0.6,3)\CROSS(0,0.2)$; the geometry
of the domain is shown in Figure~\ref{Fig:Mach3_step}.  The initial
data is $\rho=1.4$, $p=1$, $\bv=(3,0)\tr$. The inflow boundary
conditions are $\rho_{|\{x=0\}}=1.4$, $p_{|\{x=0\}}=1$,
$\bv_{|\{x=0\}}=(3,0)\tr$. The outflow boundary conditions are free,
\ie we do nothing at $\{x=3\}$. On the top and bottom boundaries of
the channel we enforce $\bv\SCAL \bn=0$. This is done by setting
$\bbm_i^{n+1}\SCAL \bn=0$ at the end of each substep of the SSP
RK(3,3) algorithm at each node $\bx_i$ belonging to the boundary in
question; moreover the vectors $\bc_{ij}$ introduced in
\eqref{def_of_cij} are redefined as follows
$\bc_{ij} = -\int_{\Dom} \varphi_j \GRAD \varphi_i \dif x$ at each
node $\bx_i$ belonging to the boundary in question. This integration
by parts is justified by the observation that it implies global
conservation under the assumption that $\bef(\bu)\SCAL \bn =0$ over
the entire boundary of $\Dom$.

The computation is done from $t=0$ to $t=4$.  We show in
Figure~\ref{Fig:Mach3_step} a Schlieren-type snapshot of the density
at $t=4$ obtained with Code~2.  The Kelvin-Helmholtz instability of
the contact discontinuity is clearly visible. No regularization or
smoothing is applied at the top left corner of the step.

\begin{figure}[hbt]
\includegraphics[width=0.93\textwidth]{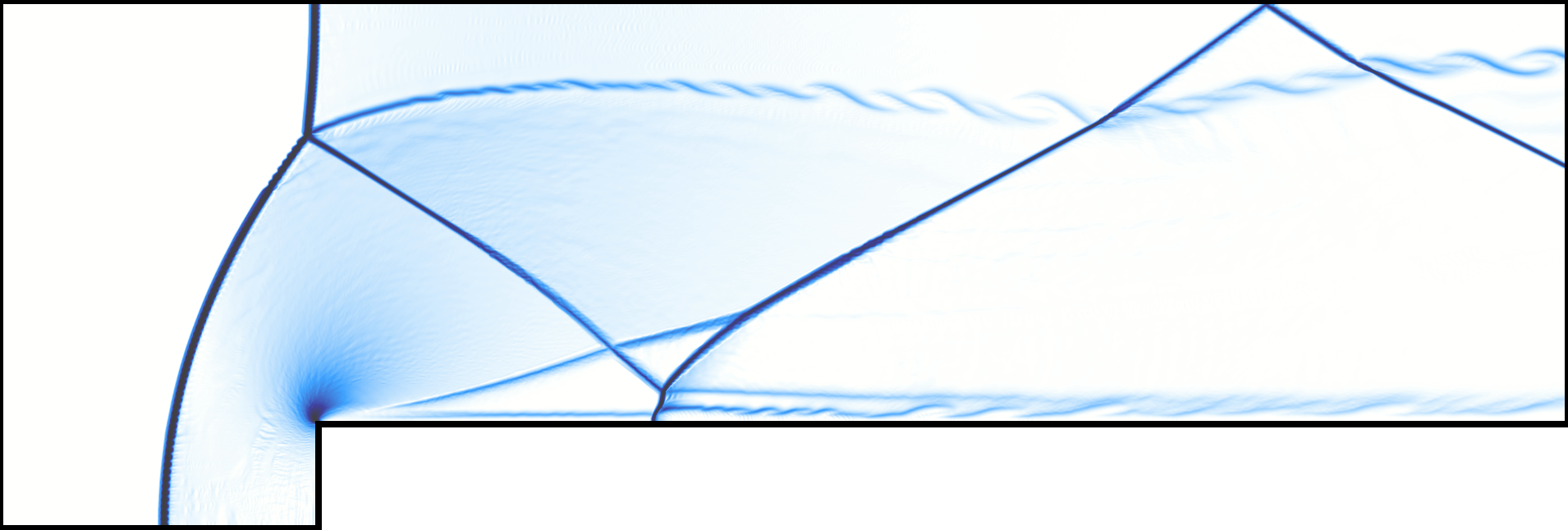}
\caption{Mach 3 step, density, $t=4$, density,
$\polP_1$ approximation using Code~2.}
\label{Fig:Mach3_step}
\end{figure}

\bibliographystyle{abbrvnat}
\bibliography{ref}

\end{document}